\begin{document}

\renewcommand{\theequation}{\thesection.\arabic{equation}}
\newcommand{\nc}{\newcommand}

\nc{\pr}{\noindent{\em Proof. }} \nc{\g}{\mathfrak g}
\nc{\n}{\mathfrak n} \nc{\opn}{\overline{\n}}\nc{\h}{\mathfrak h}
\renewcommand{\b}{\mathfrak b}
\nc{\Ug}{U(\g)} \nc{\Uh}{U(\h)} \nc{\Un}{U(\n)}
\nc{\Uopn}{U(\opn)}\nc{\Ub}{U(\b)} \nc{\p}{\mathfrak p}
\renewcommand{\l}{\mathfrak l}
\nc{\z}{\mathfrak z} \renewcommand{\h}{\mathfrak h}
\nc{\m}{\mathfrak m}
\renewcommand{\k}{\mathfrak k}
\nc{\opk}{\overline{\k}}
\nc{\opb}{\overline{\b}}
\nc{\e}{{\epsilon}}
\nc{\ke}{{\bf k}_\e}
\nc{\Hk}{{\rm Hk}^{\gr}(A,A_0,\e )}
\nc{\gr}{\bullet}
\nc{\ra}{\rightarrow}
\nc{\Alm}{A-{\rm mod}}
\nc{\DAl}{{D}^-(A)}
\nc{\HA}{{\rm Hom}_A}

\newtheorem{theorem}{Theorem}{}
\newtheorem{lemma}[theorem]{Lemma}{}
\newtheorem{corollary}[theorem]{Corollary}{}
\newtheorem{conjecture}[theorem]{Conjecture}{}
\newtheorem{proposition}[theorem]{Proposition}{}
\newtheorem{axiom}{Axiom}{}
\newtheorem{remark}[theorem]{Remark}{}
\newtheorem{example}{Example}{}
\newtheorem{exercise}{Exercise}{}
\newtheorem{definition}{Definition}{}

\renewcommand{\thetheorem}{\thesection.\arabic{theorem}}

\renewcommand{\thelemma}{\thesection.\arabic{lemma}}

\renewcommand{\theproposition}{\thesection.\arabic{proposition}}

\renewcommand{\thecorollary}{\thesection.\arabic{corollary}}

\renewcommand{\theremark}{\thesection.\arabic{remark}}

\renewcommand{\thedefinition}{\thesection.\arabic{definition}}

\title{Representations of quantum groups at roots of unity, Whittaker vectors and q-W algebras}

\author{A. Sevostyanov}

\address{Institute of Pure and Applied Mathematics,
University of Aberdeen \\ Aberdeen AB24 3UE, United Kingdom \\e-mail: a.sevastyanov@abdn.ac.uk}

\begin{abstract}
Let $U_\varepsilon(\g)$ be the standard simply connected version of the Drinfeld--Jumbo quantum group at an odd m-th root of unity $\varepsilon$. The center of $U_\varepsilon(\mathfrak g)$ contains a huge commutative subalgebra isomorphic to the algebra $Z_G$ of regular functions on (a finite covering of a big cell in) a complex connected, simply connected algebraic group $G$ with Lie algebra $\mathfrak g$.
Let $V$ be a finite--dimensional representation of $U_\varepsilon(\mathfrak g)$ on which $Z_G$ acts according to a non--trivial character $\eta_g$ given by evaluation of regular functions at $g\in G$. Then $V$ is a representation of the finite--dimensional algebra $U_{\eta_g}=U_\varepsilon(\mathfrak g)/U_\varepsilon(\mathfrak g){\rm Ker}~\eta_g$.
We show that in this case, under certain restrictions on $m$, $U_{\eta_g}$ contains a subalgebra $U_{\eta_g}(\m_-)$ of dimension $m^{{\frac{1}{2}}{\rm dim}~\mathcal{O}}$, where $\mathcal{O}$ is the conjugacy class of $g$, and $U_{\eta_g}(\m_-)$ has a one--dimensional representation $\mathbb{C}_{\chi_g}$. We also prove that if $V$ is not trivial then the space of Whittaker vectors ${\rm Hom}_{U_{\eta_g}(\m_-)}(\mathbb{C}_{\chi_g},V)$ is not trivial and the algebra $W_{\eta_g}={\rm End}_{U_{\eta_g}}(U_{\eta_g}\otimes_{U_{\eta_g}(\m_-)}\mathbb{C}_{\chi_g})$ naturally acts on it which gives rise to a Schur--type duality between representations of the algebra $U_{\eta_g}$ and of the algebra $W_{\eta_g}$ called a q-W algebra.
\end{abstract}

\keywords{Quantum group}

\maketitle

\pagestyle{myheadings}
\markboth{A. SEVOSTYANOV}{REPRESENTATIONS OF QUANTUM GROUPS AT ROOTS OF UNITY AND Q-W ALGEBRAS}


\section{Introduction}

\setcounter{equation}{0}
\setcounter{theorem}{0}

Let $\frak g'$ be the Lie algebra of a semisimple algebraic group $G'$ over an algebraically closed field $\bf k$ of characteristic $p>0$. Let $x\mapsto x^{[p]}$ be the $p$-th power map of $\frak g'$ into itself. The structure of the enveloping algebra of $\frak g'$ is quite different from the zero characteristic case. Namely, the elements $x^{p}-x^{[p]},~x\in \frak g'$ are central.
For any linear form $\theta$ on $\frak g'$, let $U_{\theta}$ be the
quotient of
the enveloping algebra of $\frak g'$ by the ideal generated by the central elements
$x^{p}-x^{[p]}-\theta (x)^{p}$ with $x\in \frak g'$.  Then $U_{\theta }$ is a finite--dimensional algebra. Kac and Weisfeiler proved that any simple $\frak g'$-module can be regarded as a module over $U_{\theta }$ for a unique $\theta$ as above (this explains why all simple $\frak g'$--modules are finite--dimensional). 

One can identify $\theta $ with an element of $\frak g'$ via the Killing form and reduce the general situation to the case of nilpotent $\theta$. In that case, among other things, Premet defines in \cite{Pr1,Pr} a subalgebra $U_\theta(\m_\theta)\subset U_{\theta }$ generated by a Lie subalgebra $\m_\theta\subset \g'$ such that $U_\theta(\m_\theta)$ has dimension $p^{\frac d2}$, where d is the dimension of the coadjoint orbit of $\theta$, and is equipped with a one--dimensional representation ${\bf k}_{\chi_\theta}$, $\chi_\theta: U_\theta(\m_\theta)\rightarrow {\bf k}$. Thus for every $U_{\theta }$--module $V$ the algebra ${\rm End}_{U_{\theta }}(U_{\theta }\otimes_{U_\theta(\m_\theta)}{\bf k}_{\chi_\theta})$ naturally acts on the space ${\rm Hom}_{U_{\theta }}(U_{\theta }\otimes_{U_\theta(\m_\theta)}{\bf k}_{\chi_\theta},V)$ called the space of Whittaker vectors in $V$. This leads to a Schur--type duality between representations of the algebras $U_{\theta }$ and ${\rm End}_{U_{\theta }}(U_{\theta }\otimes_{U_\theta(\m_\theta)}{\bf k}_{\chi_\theta})$, the latter being called a W--algebra. Tensoring $V$ with the one--dimensional representation ${\bf k}_{-\chi_\theta}$ of $U_\theta(\m_\theta)$ one can define the structure of a $U_0(\m_\theta)$--module (here $U_0(\m_\theta)$ is the subalgebra generated by $\m_\theta$ in $U_0$ which corresponds to $\theta=0$) on $V$, which makes it a $U_{\theta }-U_0(\m_\theta)$--bimodule, and the two actions satisfy certain compatibility conditions. This brings the representation theory of the algebra $U_{\theta }$ into the context of equivariant representation theory showing also some similarity with the theory of generalized Gelfand--Graev representations (see \cite{Ka}).

Another important example of finite--dimensional algebras is related to the theory of quantum groups at roots of unity. Let $\mathfrak g$ be a complex finite--dimensional semisimple Lie algebra.
A remarkable property of the standard Drinfeld-Jimbo quantum group $U_\varepsilon(\mathfrak g)$ associated to $\mathfrak g$, where $\varepsilon$ is a primitive  $m$-th root of unity, is that its center  contains a huge commutative subalgebra isomorphic to the algebra $Z_G$ of regular functions on (a finite covering of a big cell in) a complex algebraic group $G$ with Lie algebra $\mathfrak g$ (see \cite{DK, DKP1}). In this paper we consider the simply connected version of $U_\varepsilon(\mathfrak g)$ and the case when $m$ is odd. In that case $G$ is the connected, simply connected algebraic group corresponding to $\g$.

Consider finite--dimensional representations of $U_\varepsilon(\mathfrak g)$, on which $Z_G$ acts according to non--trivial characters $\eta_g$ given by evaluation of regular functions at various points $g\in G$. Note that all irreducible representations of $U_\varepsilon(\mathfrak g)$ are of that kind, and every such representation is a representation of the algebra $U_{\eta_g}=U_\varepsilon(\mathfrak g)/U_\varepsilon(\mathfrak g){\rm Ker}~\eta_g$ for some $\eta_g$. 

In this paper we construct certain subalgebras $U_{\eta_g}(\m_-)$ in $U_{\eta_g}$ which have properties similar to those of the subalgebras $U_\theta(\m_\theta)\subset U_{\theta }$. In particular, there is a Schur--type duality between representations of the algebras $U_{\eta_g}$ and of certain quantum group versions of W--algebras called q-W--algebras. $U_{\eta_g}$--modules also naturally become $U_{\eta_g}-U_{\eta_1}(\m_-)$--bimodules.  We show that for such modules the corresponding spaces of Whittaker vectors are always non--trivial which can be regarded as an analogue of the Engel theorem for quantum groups at roots of unity.

It turns out that the definition of the subalgebras $U_{\eta_g}(\m_-)$ is related to the existence of some special transversal slices $\Sigma_s$ to the set of conjugacy classes in $G$. These slices $\Sigma_s$ associated to (conjugacy classes of) elements $s$ in the Weyl group of $\g$ were introduced by the author in \cite{S6}. The slices $\Sigma_s$ play the role of the Slodowy slices in algebraic group theory. In the particular case of elliptic Weyl group elements these slices were also introduced later by He and Lusztig in paper \cite{Lus} within a different framework.

A remarkable property of a slice $\Sigma_s$ is that if $g$ is conjugate to an element in $\Sigma_s$ then $U_{\eta_g}$ has a subalgebra of dimension $m^{\frac{1}{2}{\rm codim}~\Sigma_s}$ with a non--trivial character. If $g\in \Sigma_s$ (in fact $g$ may belong to a larger variety) then the corresponding subalgebra $U_{\eta_g}(\m_-)$ can be explicitly described in terms of quantum group analogues of root vectors. There are also analogues of subalgebras $U_{\eta_g}(\m_-)$ in $U_q(\mathfrak g)$ in case of generic $q$ (see \cite{S9}).

Q-W--algebras can be regarded as noncommutative deformations of truncated versions of the algebras of regular functions on the slices $\Sigma_s$. In case of generic $\varepsilon$ q-W algebras were introduced and studied in \cite{S9}.

In \cite{S10}, Theorem 5.2 it is shown that for every conjugacy class $\mathcal{O}$ in $G$ one can find a transversal slice $\Sigma_s$ such that $\mathcal{O}$ intersects $\Sigma_s$ and ${\rm dim}~\mathcal{O}={\rm codim}~\Sigma_s$. Thus in this case for the corresponding algebra $U_{\eta_g}(\m_-)$ we have ${\rm dim}~U_{\eta_g}(\m_-)=m^{{\frac{1}{2}}{\rm dim}~\mathcal{O}}$, where $\mathcal{O}$ is the conjugacy class of $g$.

There is, however, a major difference between Lie algebras and quantum groups: in case of Lie algebras $\g'$ over fields of prime characteristic the algebras $U_\theta(\m_\theta)$ are local while in the quantum group case the algebras ${U}_{\eta_g}(\m_-)$, which play the role of $U_\theta(\m_\theta)$, are not local. In particular, each ${U}_{\eta_g}(\m_-)$ may have several one--dimensional representations.

The author is grateful to the referee for careful reading of the text.


\section{Notation}\label{notation}

\setcounter{equation}{0}
\setcounter{theorem}{0}

Fix the notation used throughout the text.
Let $G$ be a
simply connected finite--dimensional complex simple Lie group, $
{\frak g}$ its Lie algebra. Fix a Cartan subalgebra ${\frak h}\subset {\frak
g}$ and let $\Delta $ be the set of roots of $\left( {\frak g},{\frak h}
\right)$.  Let $\alpha_i,~i=1,\ldots, l,~~l=rank({\frak g})$ be a system of
simple roots, $\Delta_+=\{ \beta_1, \ldots ,\beta_N \}$
the set of positive roots.
Let $H_1,\ldots ,H_l$ be the set of simple root generators of $\frak h$.

Let $a_{ij}$ be the corresponding Cartan matrix,
and let $d_1,\ldots , d_l$, $d_i\in \{1,2,3\}$, $i=1,\ldots , l$ be coprime positive integers such that the matrix
$b_{ij}=d_ia_{ij}$ is symmetric. There exists a unique non--degenerate invariant
symmetric bilinear form $\left( ,\right) $ on ${\frak g}$ such that
$(H_i , H_j)=d_j^{-1}a_{ij}$. It induces an isomorphism of vector spaces
${\frak h}\simeq {\frak h}^*$ under which $\alpha_i \in {\frak h}^*$ corresponds
to $d_iH_i \in {\frak h}$. We denote by $\alpha^\vee$ the element of $\frak h$ that
corresponds to $\alpha \in {\frak h}^*$ under this isomorphism.
The induced bilinear form on ${\frak h}^*$ is given by
$(\alpha_i , \alpha_j)=b_{ij}$.

Let $W$ be the Weyl group of the root system $\Delta$. $W$ is the subgroup of $GL({\frak h})$
generated by the fundamental reflections $s_1,\ldots ,s_l$,
$$
s_i(h)=h-\alpha_i(h)H_i,~~h\in{\frak h}.
$$
The action of $W$ preserves the bilinear form $(,)$ on $\frak h$.

For any root $\alpha\in \Delta$ we also denote by $s_\alpha$ the corresponding reflection.

For every element $w\in W$ one can introduce the set $\Delta_w=\{\alpha \in \Delta_+: w(\alpha)\in -\Delta_+\}$, and the number of the elements in the set $\Delta_w$ is equal to the length $l(w)$ of the element $w$ with respect to the system $\Gamma$ of simple roots in $\Delta_+$.

Let $\b_+$ be the Borel subalgebra corresponding to $\Delta_+$, $\b_-$ the opposite Borel subalgebra, $\n_\pm$ their nilradicals.
Denote by $H,N_+, N_-,B_+,B_-$ the maximal torus, the maximal unipotent subgroups and the Borel subgroups of $G$ which correspond to the Lie subalgebras ${\frak h},{{\frak n}_+},{\frak n}_-,{\frak b}_+$ and ${\frak b}_-,$ respectively.

We identify $\frak g$ and its dual by means of the canonical invariant bilinear form.
Then the coadjoint
action of $G$ on ${\frak g}^*$ is naturally identified with the adjoint one. 

Let ${\frak g}_\beta$ be the root subspace corresponding to a root $\beta \in \Delta$,
${\frak g}_\beta=\{ x\in {\frak g}| [h,x]=\beta(h)x \mbox{ for every }h\in {\frak h}\}$.
${\frak g}_\beta\subset {\frak g}$ is a one--dimensional subspace.
It is well known that for $\alpha\neq -\beta$ the root subspaces ${\frak g}_\alpha$ and ${\frak g}_\beta$ are orthogonal with respect
to the canonical invariant bilinear form. Moreover ${\frak g}_\alpha$ and ${\frak g}_{-\alpha}$
are non--degenerately paired by this form.

Root vectors $X_{\alpha}\in {\frak g}_\alpha$ satisfy the following relations:
$$
[X_\alpha,X_{-\alpha}]=(X_\alpha,X_{-\alpha})\alpha^\vee.
$$

Note also that in this paper we denote by $\mathbb{N}$ the set of nonnegative integer numbers, $\mathbb{N}=\{0,1,\ldots \}$.


\section{Quantum groups}

\setcounter{equation}{0}
\setcounter{theorem}{0}

In this paper we shall consider some specializations of the standard Drinfeld-Jimbo quantum group $U_h({\frak g})$ defined over the ring of formal power series ${\Bbb C}[[h]]$, where $h$ is an indeterminate.
We follow the notation of \cite{ChP}.

Let $V$ be a ${\Bbb C}[[h]]$--module equipped with the $h$--adic
topology. This topology is characterized by requiring that
$\{ h^nV ~|~n\geq 0\}$ is a base of the neighborhoods of $0$ in $V$, and that translations
in $V$ are continuous.

A topological Hopf algebra over ${\Bbb C}[[h]]$ is a complete ${\Bbb C}[[h]]$--module 
equipped with a structure of ${\Bbb C}[[h]]$--Hopf algebra (see \cite{ChP}, Definition 4.3.1),
the algebraic tensor products entering the axioms of the Hopf algebra are replaced by their
completions in the $h$--adic topology.

The standard quantum group $U_h({\frak g})$ associated to a complex finite--dimensional simple Lie algebra
$\frak g$ is a topological Hopf algebra over ${\Bbb C}[[h]]$ topologically generated by elements
$H_i,~X_i^+,~X_i^-,~i=1,\ldots ,l$, subject to the following defining relations:
$$
\begin{array}{l}
[H_i,H_j]=0,~~ [H_i,X_j^\pm]=\pm a_{ij}X_j^\pm, ~~X_i^+X_j^- -X_j^-X_i^+ = \delta _{i,j}{K_i -K_i^{-1} \over q_i -q_i^{-1}},\\
\\
\sum_{r=0}^{1-a_{ij}}(-1)^r
\left[ \begin{array}{c} 1-a_{ij} \\ r \end{array} \right]_{q_i}
(X_i^\pm )^{1-a_{ij}-r}X_j^\pm(X_i^\pm)^r =0 ,~ i \neq j ,
\end{array}
$$
where
$$
K_i=e^{d_ihH_i},~~e^h=q,~~q_i=q^{d_i}=e^{d_ih},
$$
$$
\left[ \begin{array}{c} m \\ n \end{array} \right]_q={[m]_q! \over [n]_q![n-m]_q!} ,~
[n]_q!=[n]_q\ldots [1]_q ,~ [n]_q={q^n - q^{-n} \over q-q^{-1} },
$$
with comultiplication defined by
$$
\Delta_h(H_i)=H_i\otimes 1+1\otimes H_i,~~
\Delta_h(X_i^+)=X_i^+\otimes K_i^{-1}+1\otimes X_i^+,~~\Delta_h(X_i^-)=X_i^-\otimes 1 +K_i\otimes X_i^-,
$$
antipode defined by
$$
S_h(H_i)=-H_i,~~S_h(X_i^+)=-X_i^+K_i,~~S_h(X_i^-)=-K_i^{-1}X_i^-,
$$
and counit defined by
$$
\varepsilon_h(H_i)=\varepsilon_h(X_i^\pm)=0.
$$

We shall also use the weight--type generators
$$
Y_i=\sum_{j=1}^l d_i(a^{-1})_{ij}H_j.
$$
Let $L_i^{\pm 1}=e^{\pm hY_i}$.

Now we shall explicitly describe a linear basis for $U_h({\frak g})$.
First following \cite{ChP} we recall the construction of root vectors of $U_h({\frak g})$  in terms of  a braid group action on $U_h({\frak g})$. Let $m_{ij}$, $i\neq j$ be equal to $2,3,4,6$ if $a_{ij}a_{ji}$ is equal to $0,1,2,3$, respectively. The braid group $\mathcal{B}_\g$ associated to $\g$ has generators $T_i$, $i=1,\ldots, l$, and defining relations
$$
T_iT_jT_iT_j\ldots=T_jT_iT_jT_i\ldots
$$
for all $i\neq j$, where there are $m_{ij}$ $T$'s on each side of the equation.

$\mathcal{B}_\g$ acts by algebra automorphisms of $U_h({\frak g})$ as follows:
\begin{eqnarray*}
T_i(X_i^+)=-X_i^-e^{hd_iH_i},~T_i(X_i^-)=-e^{-hd_iH_i}X_i^+,~T_i(H_j)=H_j-a_{ji}H_i, \\
\\
T_i(X_j^+)=\sum_{r=0}^{-a_{ij}}(-1)^{r-a_{ij}}q_i^{-r}
(X_i^+ )^{(-a_{ij}-r)}X_j^+(X_i^+)^{(r)},~i\neq j,\\
\\
T_i(X_j^-)=\sum_{r=0}^{-a_{ij}}(-1)^{r-a_{ij}}q_i^{r}
(X_i^-)^{(r)}X_j^-(X_i^-)^{(-a_{ij}-r)},~i\neq j,
\end{eqnarray*}
where
$$
(X_i^+)^{(r)}=\frac{(X_i^+)^{r}}{[r]_{q_i}!},~(X_i^-)^{(r)}=\frac{(X_i^-)^{r}}{[r]_{q_i}!},~r\geq 0,~i=1,\ldots,l.
$$

Recall that an ordering of a set of positive roots $\Delta_+$ is called normal if
for any three roots $\alpha,~\beta,~\gamma$ such that
$\gamma=\alpha+\beta$ we have either $\alpha<\gamma<\beta$ or $\beta<\gamma<\alpha$.

For any reduced decomposition $w_0=s_{i_1}\ldots s_{i_D}$ of the longest element $w_0$ of the Weyl group $W$ of $\g$ the ordering
$$
\beta_1=\alpha_{i_1},\beta_2=s_{i_1}\alpha_{i_2},\ldots,\beta_D=s_{i_1}\ldots s_{i_{D-1}}\alpha_{i_D}
$$
is a normal ordering in $\Delta_+$, and there is a one--to--one correspondence between normal orderings of $\Delta_+$ and reduced decompositions of $w_0$ (see \cite{Z1}).

Fix a reduced decomposition $w_0=s_{i_1}\ldots s_{i_D}$ of $w_0$ and define the corresponding root vectors in $U_h({\frak g})$ by
\begin{equation}\label{rootvect}
X_{\beta_k}^\pm=T_{i_1}\ldots T_{i_{k-1}}X_{i_k}^\pm.
\end{equation}

Note that one can construct root vectors in the Lie algebra $\g$ in a similar way. Namely, if $X_{\pm \alpha_i}$ are simple root vectors of $\g$ then one can introduce an action of the braid group $\mathcal{B}_\g$ by algebra automorphisms of ${\frak g}$ defined on the standard generators as follows:
\begin{eqnarray*}
T_i(X_{\pm \alpha_i})=-X_{\mp \alpha_i},~T_i(H_j)=H_j-a_{ji}H_i, \\
\\
T_i(X_{\alpha_j})=\frac{1}{(-a_{ij})!}
{\rm ad}_{X_{\alpha_i} }^{-a_{ij}}X_{\alpha_j},~i\neq j,\\
\\
T_i(X_{-\alpha_j})=\frac{(-1)^{a_{ij}}}{(-a_{ij})!}
{\rm ad}_{X_{-\alpha_i} }^{-a_{ij}}X_{-\alpha_j},~i\neq j.
\end{eqnarray*}
Now the root vectors $X_{\pm \beta_k}\in \g_{\pm \beta_k}$ of $\g$ can be defined by
\begin{equation}\label{rootvectg}
X_{\pm \beta_k}=T_{i_1}\ldots T_{i_{k-1}}X_{\pm \alpha_{i_k}}.
\end{equation}

The root vectors $X_{\beta}^-$ satisfy the following relations:
\begin{equation}\label{qcom}
X_{\alpha}^-X_{\beta}^- - q^{(\alpha,\beta)}X_{\beta}^-X_{\alpha}^-= \sum_{\alpha<\delta_1<\ldots<\delta_n<\beta}C(k_1,\ldots,k_n)
{(X_{\delta_1}^-)}^{k_1}{(X_{\delta_2}^-)}^{k_2}\ldots {(X_{\delta_n}^-)}^{k_n},~\alpha<\beta,
\end{equation}

where $C(k_1,\ldots,k_n)\in {\mathcal{P}}$, and ${\mathcal{P}}={\Bbb C}[q,q^{-1}]$ if $\g$ is simply-laced, ${\mathcal{P}}={\Bbb C}[q,q^{-1}, \frac{1}{[2]_q}]$ if $\g$ is of type $B_l,C_l$ or $F_4$, and ${\mathcal{P}}={\Bbb C}[q,q^{-1}, \frac{1}{[2]_q}, \frac{1}{[3]_q}]$ if $\g$ is of type $G_2$.

Note that by construction
$$
\begin{array}{l}
X_\beta^+~(\mbox{mod }h)=X_\beta \in {\frak g}_\beta,\\
\\
X_\beta^-~(\mbox{mod }h)=X_{-\beta} \in {\frak g}_{-\beta}
\end{array}
$$
are root vectors of $\frak g$.

$U_h({\frak g})$ is a quasitriangular Hopf algebra, i.e. there exists an invertible element
${\mathcal R}\in U_h({\frak g})\otimes U_h({\frak g})$, called a universal R--matrix, such that
\begin{equation}\label{quasitr}
\Delta^{opp}_h(a)={\mathcal R}\Delta_h(a){\mathcal R}^{-1}\mbox{ for all } a\in U_h({\frak g}).
\end{equation}

An explicit expression for $\mathcal R$ may be written by making use of the q--exponential
$$
exp_q(x)=\sum_{k=0}^\infty q^{\frac{1}{2}k(k+1)}{x^k \over [k]_q!}
$$
in terms of which the element $\mathcal R$ takes the form (see e.g. \cite{ChP}, Theorem 8.3.9):
\begin{equation}\label{univr}
{\mathcal R}=\prod_{\beta}
exp_{q_{\beta}}[(1-q_{\beta}^{-2})X_{\beta}^-\otimes X_{\beta}^+]exp\left[h\sum_{i=1}^l(Y_i\otimes H_i)\right],
\end{equation}
where
the product is over all the positive roots of $\frak g$, and the order of the terms is such that
the $\alpha$--term appears to the left of the $\beta$--term if $\alpha < \beta$ with respect to the normal
ordering of $\Delta_+$.

One can calculate the action of the comultiplication on the root vectors $X_{\beta_k}^\pm$ in terms of the universal R--matrix. For instance for $\Delta_h(X_{\beta_k}^-)$ one has (see e.g. \cite{ChP}, Theorem 8.3.7)
\begin{equation}\label{comult}
\Delta_h(X_{\beta_k}^-)=\widetilde{\mathcal R}_{<\beta_k}(X_{\beta_k}^-\otimes 1+e^{h\beta^\vee}\otimes X_{\beta_k}^-)\widetilde{\mathcal R}^{-1}_{<\beta_k},
\end{equation}
where
$$
\widetilde{\mathcal R}_{<\beta_k}=\widetilde{\mathcal R}_{\beta_{1}}\ldots \widetilde{\mathcal R}_{\beta_{k-1}},~\widetilde{\mathcal R}_{\beta_r}=exp_{q_{\beta_r}}[(1-q_{\beta_r}^{-2})X_{\beta_r}^+\otimes X_{\beta_r}^-].
$$


\section{Realizations of quantum groups associated to Weyl group elements}\label{wqreal}

\setcounter{equation}{0}
\setcounter{theorem}{0}

Some important ingredients that will be used in the proof of the main statement in Section \ref{WHITT} are certain subalgebras of the quantum group. These subalgebras are defined in terms of realizations of the algebra $U_h(\g)$ associated to Weyl group elements. Following \cite{S9} we introduce these realizations in this section.

Let $s$ be an element of the Weyl group $W$ of the pair $(\g,\h)$, and $\h'$ the orthogonal complement in $\h$, with respect to the Killing form, to the subspace of $\h$ fixed by the natural action of $s$ on $\h$. Let $\h'^*$ be the image of $\h'$ in $\h^*$ under the identification $\h^*\simeq \h$ induced by the canonical bilinear form on $\g$.
The restriction of the natural action of $s$ on $\h^*$ to the subspace $\h'^*$ has no fixed points. Therefore one can define the Cayley transform ${1+s \over 1-s }P_{{\h'}^*}:\h^*\rightarrow {{\h'}^*}\subset \h^*$ of the restriction of $s$ to ${\h'}^*$, where $P_{{\h'}^*}$ is the orthogonal projection operator onto ${{\h'}^*}$ in $\h^*$, with respect to the Killing form.

Recall that in the
classification theory of conjugacy classes in the Weyl group $W$ of the complex simple Lie algebra $\g$
the so-called primitive (or semi--Coxeter in another terminology) elements play a primary role.
According to the results of \cite{C} the element $s$ of the Weyl group of the pair $(\g,\h)$ is primitive in the
Weyl group $W'$ of a regular semisimple Lie subalgebra $\g'\subset \g$, ${\rm rank}~\g'={\rm dim}~\h'$, of the form
$$
\g'=\h'+\sum_{\alpha\in \Delta'}\g_\alpha,
$$
where $\Delta'$ is a root subsystem of the root system $\Delta$ of $\g$, $\g_\alpha$ is the root subspace of $\g$ corresponding to root $\alpha$.

Moreover, by Theorem C in \cite{C} $s$ can be represented as a product of two involutions,
\begin{equation}\label{inv}
s=s^1s^2,
\end{equation}
where $s^1=s_{\gamma_1}\ldots s_{\gamma_n}$, $s^2=s_{\gamma_{n+1}}\ldots s_{\gamma_{l'}}$, the roots in each of the sets $\gamma_1, \ldots \gamma_n$ and ${\gamma_{n+1}}\ldots {\gamma_{l'}}$ are positive and mutually orthogonal, and
the roots $\gamma_1, \ldots \gamma_{l'}$ form a linear basis of ${\h'}^*$, in particular $l'$ is the rank of $\g'$.
The scalar products $\left( {1+s \over 1-s }P_{{\h'}^*}\gamma_i , \gamma_j \right)$ can be computed as follows.
\begin{lemma}{\bf (\cite{S9}, Lemma 6.2)}\label{tmatrel}
Let $P_{{\h'}^*}$ be the orthogonal projection operator onto ${{\h'}^*}$ in $\h^*$, with respect to the Killing form.
Then the scalar products $\left( {1+s \over 1-s }P_{{\h'}^*}\gamma_i , \gamma_j \right)$ are of the form:
\begin{equation}\label{matrel}
\left( {1+s \over 1-s }P_{{\h'}^*}\gamma_i , \gamma_j \right)=
\varepsilon_{ij}(\gamma_i,\gamma_j),
\end{equation}
where
$$
\varepsilon_{ij} =\left\{ \begin{array}{ll}
-1 & i <j \\
0 & i=j \\
1 & i >j
\end{array}
\right  . .
$$
\end{lemma}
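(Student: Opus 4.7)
The plan is to reduce the lemma to an explicit matrix calculation exploiting the block structure arising from the splitting $\{\gamma_1,\dots,\gamma_{l'}\}=\Gamma^1\sqcup\Gamma^2$, where $\Gamma^1=\{\gamma_1,\dots,\gamma_n\}$ and $\Gamma^2=\{\gamma_{n+1},\dots,\gamma_{l'}\}$ each consist of mutually orthogonal roots.

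Using the formula $s^k(v)=v-2\sum_{\gamma\in\Gamma^k}\frac{(v,\gamma)}{(\gamma,\gamma)}\gamma$, I would first compute the matrix of $s=s^1s^2$ in the basis $\gamma_1,\dots,\gamma_{l'}$; in $2\times 2$ block form indexed by the two subsets it reads
\[
s=\begin{pmatrix}-I+BC' & B \\ -C' & -I\end{pmatrix},\quad B_{ij}=\tfrac{2(\gamma_{n+j},\gamma_i)}{(\gamma_i,\gamma_i)},\quad C'_{kj}=\tfrac{2(\gamma_j,\gamma_{n+k})}{(\gamma_{n+k},\gamma_{n+k})}.
\]
Let $M_{ij}=(\gamma_i,\gamma_j)$ be the Gram matrix; by within-set orthogonality its block form is $M=\left(\begin{smallmatrix}D^1 & M^{12} \\ (M^{12})^T & D^2\end{smallmatrix}\right)$ with diagonal $D^1,D^2$. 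Letting $A$ denote the matrix of $\frac{1+s}{1-s}|_{\mathfrak{h}'^*}$ in the same basis, the lemma is equivalent to the matrix identity $MA=-\varepsilon\circ M$, where $\circ$ is the Hadamard product and $\varepsilon_{ij}=\mathrm{sign}(i-j)$.

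Since $s$ is an isometry of the Killing form, its Cayley transform satisfies $(Av,w)+(v,Aw)=0$ on $\mathfrak{h}'^*$, which forces $MA$ to be antisymmetric as a matrix. Combined with $M^T=M$, this reduces the claim to checking only the top-left and top-right blocks of $MA$. To compute these, invert $1-s$ via Schur complement: setting $T:=M^{12}(D^2)^{-1}(M^{12})^T$, one has $BC'=4(D^1)^{-1}T$, so the Schur complement with respect to the bottom-right block is $2(D^1)^{-1}(D^1-T)$, with inverse $R=\tfrac12(D^1-T)^{-1}D^1$. Expanding $(1+s)(1-s)^{-1}$ blockwise and then left-multiplying by $M$ yields $(MA)_{\mathrm{tl}}=0$ and $(MA)_{\mathrm{tr}}=M^{12}$; the cancellation is driven by the trivial identity $(D^1)^{-1}T(D^1-T)^{-1}+(D^1)^{-1}=(D^1-T)^{-1}$ (a rewriting of $(D^1-T)+T=D^1$). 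The first equality matches the top-left block of $-\varepsilon\circ M$ because $\varepsilon_{ii}=0$ and the off-diagonal entries of $M$ within $\Gamma^1$ vanish by orthogonality; the second matches the top-right block of $-\varepsilon\circ M$ since $\varepsilon_{ij}=-1$ for $i\le n<j$.

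The main obstacle is purely bookkeeping: no conceptual difficulty is hidden in the calculation, and the antisymmetry shortcut above reduces the number of blocks that must be computed explicitly by half. If one preferred a coordinate-free route, the relation $1-s=s^1(s^1-s^2)$, giving $(1-s)^{-1}=(s^1-s^2)^{-1}s^1$, allows an equivalent derivation via the involutions directly, but the bookkeeping ends up essentially the same.
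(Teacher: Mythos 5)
The paper cites this lemma from \cite{S9} (Lemma~6.2) and does not reproduce a proof, so there is no in-paper argument to compare against; I evaluate your proposal on its own terms.

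Your block-matrix calculation is correct as far as it goes: the matrix of $s=s^1s^2$ in the basis $\gamma_1,\ldots,\gamma_{l'}$, the Schur-complement inversion of $1-s$, and the identities $(MA)_{\mathrm{tl}}=0$ and $(MA)_{\mathrm{tr}}=M^{12}$ all check out, and the displayed ``trivial identity'' is indeed what makes the cancellations work. However, your claim that antisymmetry of $MA$ ``reduces the claim to checking only the top-left and top-right blocks'' is an overstatement. Antisymmetry determines the bottom-left block from the top-right one, but it only constrains the bottom-right block $(MA)_{\mathrm{br}}$ (indices $i,j>n$) to be skew-symmetric; the lemma asserts it is actually \emph{zero} (since both $\varepsilon_{ii}=0$ and $(\gamma_i,\gamma_j)=0$ for $i\neq j$ inside $\Gamma^2$). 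So you must either compute $(MA)_{\mathrm{br}}$ explicitly --- it equals $(M^{12})^T\bigl[(D^1)^{-1}T(D^1-T)^{-1}-(D^1-T)^{-1}+(D^1)^{-1}\bigr]M^{12}$, which vanishes by the very same identity you quote --- or invoke the $\Gamma^1\leftrightarrow\Gamma^2$ swap symmetry (replacing $s$ by $s^{-1}=s^2s^1$ negates the Cayley transform), so that the vanishing of the top-left block forces the vanishing of the bottom-right one. As written, this step is missing.

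As a remark on economy: the two diagonal blocks can be dispatched without any matrix inversion. Conjugating by $s^1$ gives $s^1\frac{1+s}{1-s}s^1=\frac{1+s^{-1}}{1-s^{-1}}=-\frac{1+s}{1-s}$, and likewise for $s^2$. Hence for $i\le n$ the vector $A\gamma_i$ is $s^1$-fixed, so it is orthogonal to $\mathrm{span}(\gamma_1,\ldots,\gamma_n)$; this gives $(A\gamma_i,\gamma_j)=0$ for $i,j\le n$ immediately, and symmetrically for $i,j>n$. Only the mixed block then requires the inversion of $1-s$. This conceptual shortcut would have both shortened your argument and made the bottom-right block vanish for free, closing the gap noted above.
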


Let $\gamma_i^*$, $i=1,\ldots, l'$ be the basis of $\h'^*$ dual to $\gamma_i$, $i=1,\ldots, l'$ with respect to the restriction of the bilinear form $(\cdot,\cdot)$ to $\h'^*$. Since the numbers $(\gamma_i,\gamma_j)$ are integer each element $\gamma_i^*$ has the form $\gamma_i^*=\sum_{j=1}^{l'}m_{ij}\gamma_j$, where $m_{ij}\in \mathbb{Q}$. Therefore by Lemma \ref{tmatrel} and using for simple roots $\alpha_i$ the decomposition of the form $P_{{\h'}^*}\alpha_i=\sum_{p=1}^{l'}(\alpha_i,\gamma_p)\gamma_p^*=\sum_{p,q=1}^{l'}(\alpha_i,\gamma_p)m_{pq}\gamma_q$ we deduce that the numbers
\begin{eqnarray}
q_{ij}=\frac{1}{d_j}\left( {1+s \over 1-s }P_{{\h'}^*}\alpha_i,\alpha_j\right)= \qquad \qquad \qquad \qquad \qquad \qquad \qquad \label{pij} \\ \qquad \qquad =\frac{1}{d_j}\sum_{k,l,p,q=1}^{l'}(\gamma_k,\alpha_i)(\gamma_l,\alpha_j)\left( {1+s \over 1-s }P_{{\h'}^*}\gamma_p,\gamma_q\right)m_{kp}m_{lq},~i,j=1,\ldots,l \nonumber
\end{eqnarray}
are rational as all factors in the products in the sum in the right hand side are rational.  
Fix a positive integer $d$ such that $q_{ij}\in \frac{1}{d}\mathbb{Z}$ for any $i<j$ or for any $i>j$, $i,j=1,\ldots ,l$.

Let $n$ be a non-zero integer number, and
let
$U_h^{s}({\frak g})$ be the topological algebra over ${\Bbb C}[[h]]$ topologically generated by elements
$e_i , f_i , H_i,~i=1, \ldots l$ subject to the relations:
$$
\begin{array}{l}
[H_i,H_j]=0,~~ [H_i,e_j]=a_{ij}e_j, ~~ [H_i,f_j]=-a_{ij}f_j,~~e_i f_j -q^{ c_{ij}} f_j e_i = \delta _{i,j}{K_i -K_i^{-1} \over q_i -q_i^{-1}} ,\\
\\
c_{ij}=nd\left( {1+s \over 1-s }P_{{\h'}^*}\alpha_i , \alpha_j \right),~~K_i=e^{d_ihH_i}, \\
\\
\sum_{r=0}^{1-a_{ij}}(-1)^r q^{r c_{ij}}
\left[ \begin{array}{c} 1-a_{ij} \\ r \end{array} \right]_{q_i}
(e_i )^{1-a_{ij}-r}e_j (e_i)^r =0 ,~ i \neq j , \\
\\
\sum_{r=0}^{1-a_{ij}}(-1)^r q^{r c_{ij}}
\left[ \begin{array}{c} 1-a_{ij} \\ r \end{array} \right]_{q_i}
(f_i )^{1-a_{ij}-r}f_j (f_i)^r =0 ,~ i \neq j .
\end{array}
$$
Note that the matrix $c_{ij}$ is skew--symmetric.

\begin{proposition} {\bf (\cite{S9}, Theorem 4.1)} \label{newreal}
For every solution $n_{ij}\in {\Bbb C},~i,j=1,\ldots ,l$ of equations
\begin{equation}\label{eqpi}
d_jn_{ij}-d_in_{ji}=c_{ij}
\end{equation}
there exists an algebra
isomorphism $\psi_{\{ n_{ij}\}} : U_h^{s}({\frak g}) \rightarrow
U_h({\frak g})$ defined  by the formulas:
$$
\psi_{\{ n_{ij}\}}(e_i)=X_i^+ \prod_{p=1}^le^{hn_{ip}Y_p},~~
\psi_{\{ n_{ij}\}}(f_i)=\prod_{p=1}^le^{-hn_{ip}Y_p}X_i^- ,~~
\psi_{\{ n_{ij}\}}(H_i)=H_i, i=1,\ldots ,l .
$$
\end{proposition}

The algebra $U_h^{s}({\frak g})$ is called the realization of the quantum group $U_h({\frak g})$ corresponding to the element $s\in W$.

\begin{remark}\label{auts}
Let $n_{ij}'\in {\Bbb C}$ be a solution of the homogeneous system that corresponds to (\ref{eqpi}),
\begin{equation}\label{homeq}
d_in_{ji}'-d_jn_{ij}'=0.
\end{equation}
Then the map defined by
\begin{equation}\label{sautom}
\begin{array}{l}
X_i^+ \mapsto X_i^+ \prod_{p=1}^le^{hn_{ip}'Y_p},\\
 \\
X_i^- \mapsto \prod_{p=1}^le^{-hn_{ip}'Y_p}X_i^- , \\
 \\
H_i \mapsto H_i
\end{array}
\end{equation}
is an automorphism of $U_h({\frak g})$. Therefore for given element $s\in W$ the isomorphism $\psi_{\{ n_{ij}\}}$
is defined uniquely up to automorphisms (\ref{sautom}) of $U_h({\frak g})$.
\end{remark}

The realizations $U_h^{s}({\frak g})$ of the quantum group $U_h({\frak g})$
are connected with quantizations of some nonstandard bialgebra structures on $\frak g$. At the quantum level
changing bialgebra structure corresponds to the so--called Drinfeld twist (see \cite{S9}, Section 4).

Equip $U_h^{s}({\frak g})$ with the comultiplication $\Delta_{s}$ given by
$$
\begin{array}{c}
\Delta_{s}(H_i)=H_i\otimes 1+1\otimes H_i,\\
\\
\Delta_{s}(e_i)=e_i\otimes e^{-hd_iH_i}+e^{hd_i nd{1+s \over 1-s}P_{{\h'}}H_i}\otimes e_i,~~
\Delta_{s}(f_i)=f_i\otimes 1+e^{-hd_i nd{1+s \over 1-s}P_{{\h'}}H_i+hd_iH_i}\otimes f_i,
\end{array}
$$
the antipode $S_s(x)$ given by
$$
S_s(e_i)=-e^{-hd_i nd{1+s \over 1-s}P_{{\h'}}H_i}e_ie^{hd_iH_i},~S_s(f_i)=-e^{hd_i nd{1+s \over 1-s}P_{{\h'}}H_i-hd_iH_i}f_i,~S_s(H_i)=-H_i,
$$
and counit defined by
$$
\varepsilon_{s}(H_i)=\varepsilon_s(X_i^\pm)=0.
$$

The comultiplication $\Delta_{s}$ is obtained from the standard comultiplication by a Drinfeld twist (see \cite{S9}, Section 4). Namely, let
\begin{equation}\label{Ftw}
{\mathcal F}=exp(-h\sum_{i,j=1}^l {n_{ij} \over d_i}Y_i\otimes Y_j) \in U_h({\frak h})\otimes U_h({\frak h}),
\end{equation}
where $n_{ij}$ is the solution of the corresponding equation (\ref{eqpi}) used in the definition of the isomorphism $\psi_{\{ n_{ij}\}}$. Then
\begin{equation}\label{defds}
\Delta_s(a)=(\psi_{\{ n_{ij}\}}^{-1}\otimes \psi_{\{ n_{ij}\}}^{-1}){\mathcal F}\Delta(\psi_{\{ n_{ij}\}}(a)){\mathcal F}^{-1}.
\end{equation}

We shall construct analogues of root vectors for $U_h^{s}({\frak g})$.
It is convenient to introduce an operator $K\in {\rm End}~{\frak h}$ defined by
\begin{equation}\label{Kdef}
KH_i=\sum_{j=1}^l{n_{ij} \over d_i}Y_j.
\end{equation}

For any solution of equation (\ref{eqpi}) and any normal ordering of the root system $\Delta_+$
we define the following elements of $U_h^{s}({\frak g})$, $e_{\beta}=\psi_{\{ n\}}^{-1}(X_{\beta}^+e^{hK\beta^\vee})$, $f_{\beta}=\psi_{\{ n\}}^{-1}(e^{-hK\beta^\vee}X_{\beta}^-),~\beta \in \Delta_+$.

$U_h^{s}({\frak g})$ is a quasitriangular topological Hopf algebra with the universal R--matrix
${\mathcal R}^{s}$,
\begin{equation}\label{rmatrspi}
\begin{array}{l}
{\mathcal R}^{s}=
\prod_{\beta}
exp_{q_{\beta}}[(1-q_{\beta}^{-2})f_{\beta} \otimes
e_{\beta}e^{-hnd{1+s \over 1-s}P_{{\h'}} \beta^\vee}]\times \\
exp\left[ h(\sum_{i=1}^l(Y_i\otimes H_i)-
\sum_{i=1}^l nd{1+s \over 1-s }P_{{\h'}}H_i\otimes Y_i) \right],
\end{array}
\end{equation}
where $P_{{\h'}}$ is the orthogonal projection operator onto $\h'$ in $\h$ with respect to the Killing form, and the order of the terms is such that
the $\alpha$--term appears to the left of the $\beta$--term if $\alpha < \beta$ with respect to the normal
ordering of $\Delta_+$.

Using formula (\ref{comult}) and Proposition 4.3 in \cite{S9} one can also find that
\begin{equation}\label{comults}
\Delta_s(f_{\beta_k})=\widetilde{\mathcal R}_{<\beta_k}^s(e^{-hnd{1+s \over 1-s}P_{{\h'}}\beta^\vee+h\beta^\vee} \otimes f_{\beta_k} + f_{\beta_k}\otimes 1)(\widetilde{\mathcal R}^s_{<\beta_k})^{-1},
\end{equation}
where
$$
\widetilde{\mathcal R}^s_{<\beta_k}=\widetilde{\mathcal R}^s_{\beta_{1}}\ldots \widetilde{\mathcal R}^s_{\beta_{k-1}},~\widetilde{\mathcal R}^s_{\beta_r}=exp_{q_{\beta_r}}[(1-q_{\beta_r}^{-2})e_{\beta_r}e^{-hnd{1+s \over 1-s}P_{{\h'}}\beta^\vee}\otimes f_{\beta_r}],
$$
and
$$
(\widetilde{\mathcal R}_{<\beta_k}^s)^{-1}=(\widetilde{\mathcal R}_{\beta_{k-1}}^s)^{-1}\ldots (\widetilde{\mathcal R}^s_{\beta_{1}})^{-1},~(\widetilde{\mathcal R}^s_{\beta_r})^{-1}=exp_{q_{\beta_r}^{-1}}[(1-q_{\beta_r}^{2})e_{\beta_r}e^{-hnd{1+s \over 1-s}P_{{\h'}}\beta^\vee}\otimes f_{\beta_r}].
$$

We shall actually need not the algebras $U_h({\frak g})$ and $U_h^{s}({\frak g})$ themselves but some their specializations defined over certain rings and over the field of complex numbers. They are similar to the  non-restricted integral form and to its specialization for the standard quantum group $U_h(\g)$. The results below are slight modifications of similar statements for $U_h(\g)$, and we refer to \cite{ChP}, Ch. 9 for the proofs.

Note that by the choice of $d$ we have $q^{c_{ij}}\in \mathbb{C}[q,q^{-1}]$.
Let ${\mathcal{A}}={\Bbb C}[q,q^{-1}]$ if $\g$ is simply-laced, ${\mathcal{A}}={\Bbb C}[q,q^{-1}, \frac{1}{[2]_q}]$ if $\g$ is of type $B_l,C_l$ or $F_4$, and ${\mathcal{A}}={\Bbb C}[q,q^{-1}, \frac{1}{[2]_q}, \frac{1}{[3]_q}]$ if $\g$ is of type $G_2$.

Let $U_{{\mathcal{A}}}^{s}({\frak g})$ be the ${\mathcal{A}}$-subalgebra of $U_h^{s}({\frak g})$ generated by the elements $e_i , f_i , L_i^{\pm 1}, ~{K_i -K_i^{-1} \over q_i -q_i^{-1}},~i=1, \ldots, l$ and
$U_{{\mathcal{A}}}({\frak g})$ the ${\mathcal{A}}$-subalgebra of $U_h({\frak g})$ generated by the elements $X_i^\pm , L_i^{\pm 1}, ~{K_i -K_i^{-1} \over q_i -q_i^{-1}},~i=1, \ldots, l$.

\begin{remark}\label{nsol}
Note that the general solution of equation (\ref{eqpi})
is given by
\begin{equation}\label{eq3}
n_{ij}=\frac 1{2d_j} (c_{ij} + {s_{ij}}),
\end{equation}
where $s_{ij}=s_{ji}$.
If $q_{ij}\in \frac{1}{d}\mathbb{Z}$ for any $i<j$, we put
$$
s_{ij} =\left\{ \begin{array}{ll}
c_{ij} & i <j \\
0 & i=j \\
-c_{ij} & i >j
\end{array}
\right  . .
$$
Then
$$
n_{ij} =\left\{ \begin{array}{ll}
\frac 1{d_j}c_{ij} & i <j \\
0 & i=j \\
0 & i >j
\end{array}
\right  . .
$$
By the choice of $d$ we have $c_{ij}\in d_jn\mathbb{Z}$, and hence $n_{ij}\in n\mathbb{Z}$ for $i,j=1,\ldots ,l$.
A similar consideration shows that if $q_{ij}\in \frac{1}{d}\mathbb{Z}$ for any $i>j$ then there exists a solution $n_{ij}\in n\mathbb{Z}$ for $i,j=1,\ldots ,l$.
\end{remark}

Since there is a solution $n_{ij}\in n\mathbb{Z}$ for $i,j=1,\ldots ,l$ to equation (\ref{eqpi}) the map $\psi_{\{n_{ij}\}}$ associated to this solution in Proposition \ref{newreal} induces an isomorphism of algebras $\psi_{\{n_{ij}\}}:U_{{\mathcal{A}}}^s({\frak g})\rightarrow U_{{\mathcal{A}}}({\frak g})$.

All algebras introduced above are Hopf algebras with comultiplications induced from $U_h^s(\g)$ or $U_h({\frak g})$. The algebra $U_{{\mathcal{A}}}^s({\frak g})$ acts on itself by the left and the right adjoint representations,
\begin{equation}\label{Adact}
{\rm Ad}'x(y)=x_1yS_s(x_2),~{\rm Ad}x(y)=S_s(x_1)yx_2,
\end{equation}
where we use the Swedler notation for the comultiplication, $\Delta_s(x)=x_1\otimes x_2$.

Denote by $U_{\mathcal{A}}({\frak n}_+), U_{\mathcal{A}}({\frak n}_-)$ the subalgebras of $U_{\mathcal{A}}({\frak g})$ generated by the
$X_i^+$ and by the $X_i^-$, respectively. For any $\alpha\in \Delta_+$ and the quantum root vectors $X_{\alpha}^\pm$ constructed with the help of any normal ordering in $\Delta_+$ one has $X_{\alpha}^\pm\in U_{\mathcal{A}}({\frak n}_\pm)$.

Let $U_{\mathcal{A}}^{s}({\frak n}_+),U_{\mathcal{A}}^{s}({\frak n}_-)$ be the subalgebras of $U_{\mathcal{A}}^{s}({\frak g})$ generated by the
$e_i$ and by the $f_i$, $i=1,\ldots,l$, respectively.

From the definition of the quantum root vectors $X_\beta^\pm$ and from the commutation relations between the generators $X_i^\pm$ and $L_j^{\pm 1}$, $i,j=1,\ldots l$ it follows that for the solution of equation (\ref{eqpi}) introduced in Remark \ref{nsol} and any normal ordering of the root system $\Delta_+$
the elements $e_{\beta}=\psi_{\{ n_{ij}\}}^{-1}(X_{\beta}^+e^{hK\beta^\vee})$ and
$f_{\beta}=\psi_{\{ n_{ij}\}}^{-1}(e^{-hK\beta^\vee}X_{\beta}^-),~\beta \in \Delta_+$
lie in the subalgebras $U_{\mathcal{A}}^{s}({\frak n}_+)$ and $U_{\mathcal{A}}^{s}({\frak n}_-)$, respectively.

The most important for us are the specializations $U_\varepsilon(\g)$ and  $U_\varepsilon^s(\g)$ of $U_{\mathcal{A}}(\g)$ and $U_{\mathcal{A}}^{s}(\g)$, $U_\varepsilon(\g)=U_{\mathcal{A}}(\g)/(q-\varepsilon)U_{\mathcal{A}}(\g)$  $U_\varepsilon^s(\g)=U_{\mathcal{A}}^{s}(\g)/(q-\varepsilon)U_{\mathcal{A}}^{s}(\g)$, where $\varepsilon\in \mathbb{C}^*$, $\varepsilon^{2d_i}\neq 1$, $i=1,\ldots, l$ and $\varepsilon^{4}\neq 1$ if $\g$ is of type $G_2$.

Note that all specializations introduced above are invariant under the action of the braid group $\mathcal{B}_\g$.

Denote by $U_{\varepsilon}({\frak n}_+),U_{\varepsilon}({\frak n}_-)$ and $U_{\varepsilon}(\h)$ the subalgebras of $U_{\varepsilon}({\frak g})$ generated by the
$X_i^+$, by the $X_i^-$, and by the $L_i^{\pm 1}$, respectively.

Fix a normal ordering in $\Delta_+$ and let $X_\alpha^\pm$ be the corresponding quantum root vectors
Define for ${\bf r}=(r_1,\ldots ,r_D)\in {\Bbb N}^D$,
$$
(X^+)^{\bf r}=(X_{\beta_1}^+)^{r_1}\ldots (X_{\beta_D}^+)^{r_D},
$$
$$
(X^-)^{\bf r}=(X_{\beta_D}^-)^{r_D}\ldots (X_{\beta_1}^-)^{r_1},
$$
and for ${\bf s}=(s_1,\ldots s_l)\in {\Bbb Z}^{l}$,
$$
L^{{\bf s}}=L_1^{{s_1}}\ldots L_l^{{s_l}}.
$$

\begin{proposition}{\bf (\cite{ChP}, Proposition 9.2.2)}\label{PBW1}
The algebra $U_\varepsilon(\g)$ is generated over $\mathbb{C}$ by $L_i^{\pm 1},~X_i^\pm,~i=1,\ldots ,l$.

The elements $(X^+)^{\bf r}$, $(X^-)^{\bf t}$ and $L^{{\bf s}}$, for ${\bf r},~{\bf t}\in {\Bbb N}^D$,
${\bf s}\in {\Bbb Z}^l$, form linear bases of $U_\varepsilon({\frak n}_+),U_\varepsilon({\frak n}_-)$ and $U_\varepsilon({\frak h})$,
respectively, and the products $(X^+)^{\bf r}L^{{\bf s}}(X^-)^{\bf t}$ form a basis of
$U_\varepsilon({\frak g})$. In particular, multiplication defines an isomorphism of vector spaces:
$$
U_\varepsilon({\frak n}_-)\otimes U_\varepsilon({\frak h}) \otimes U_\varepsilon({\frak n}_+)\rightarrow U_\varepsilon({\frak g}).
$$
The root vectors $X_{\beta}^-$ satisfy the following relations in $U_\varepsilon({\frak g})$:
\begin{equation}\label{qcom1}
X_{\alpha}^-X_{\beta}^- - \varepsilon^{(\alpha,\beta)}X_{\beta}^-X_{\alpha}^-= \sum_{\alpha<\delta_1<\ldots<\delta_n<\beta}C(k_1,\ldots,k_n)
{(X_{\delta_n}^-)}^{k_n}{(X_{\delta_{n-1}}^-)}^{k_{n-1}}\ldots {(X_{\delta_1}^-)}^{k_1},
\end{equation}
where $\alpha<\beta$, the sum is taken over tuples of roots $\delta_1,\ldots, \delta_n$ such that $\alpha<\delta_1<\ldots<\delta_n<\beta$, and over $k_i\in \Bbb{N}$, $C(k_1,\ldots,k_n)\in {\Bbb C}$, and for each term in the right hand side $\sum_{i=1}^nk_i\delta_i=\alpha+\beta$.
\end{proposition}

Now we shall study the algebraic structure of $U_\varepsilon^{s}({\frak g})$.
Denote by $U_\varepsilon^{s}({\frak n}_\pm) $ the subalgebra in $U_\varepsilon^{s}({\frak g})$ generated by
$e_i ~(f_i) ,i=1, \ldots l$.
Let $U_\varepsilon^{s}({\frak h})$ be the subalgebra in $U_\varepsilon^{s}({\frak g})$ generated by $L_i^{\pm 1},~i=1,\ldots ,l$.

We shall construct a Poincar\'{e}--Birkhoff-Witt basis for $U_\varepsilon^{s}({\frak g})$.

\begin{proposition}\label{rootss}
(i) For any normal ordering of the root system $\Delta_+$ and for any integer valued solution of equation (\ref{eqpi})
the elements $e_{\beta}$ and
$f_{\beta}$
lie in the subalgebras $U_\varepsilon^{s}({\frak n}_+)$ and $U_\varepsilon^{s}({\frak n}_-)$, respectively.
The elements $f_{\beta},~\beta \in \Delta_+$ satisfy the following commutation relations
\begin{equation}\label{erel}
f_{\alpha}f_{\beta} - \varepsilon^{(\alpha,\beta)+nd({1+s \over 1-s}P_{{\h'}^*}\alpha,\beta)}f_{\beta}f_{\alpha}= \sum_{\alpha<\delta_1<\ldots<\delta_n<\beta}C'(k_1,\ldots,k_n)
f_{\delta_n}^{k_n}f_{\delta_{n-1}}^{k_{n-1}}\ldots f_{\delta_1}^{k_1},~\alpha <\beta,
\end{equation}
where $C'(k_1,\ldots,k_n)\in \mathbb{C}$.

(ii) Moreover, the elements
$(e)^{\bf r}=(e_{\beta_1})^{r_1}\ldots (e_{\beta_D})^{r_D},~~(f)^{\bf t}=(f_{\beta_D})^{t_D}\ldots (f_{\beta_1})^{t_1}$
and $L^{\bf s}=L_1^{s_1}\ldots L_l^{s_l}$
for ${\bf r},~{\bf t}\in {\Bbb N}^D$, ${\bf s}\in {\Bbb Z}^l$ form bases of $U_\varepsilon^{s}({\frak n}_+),~U_\varepsilon^{s}({\frak n}_-)$ and $U_\varepsilon^{s}({\frak h})$,
and the products $(f)^{\bf t}L^{\bf s}(e)^{\bf r}$ form a basis of
$U_\varepsilon^{s}({\frak g})$. In particular, multiplication defines an isomorphism of vector spaces, $$
U_\varepsilon^{s}({\frak n}_-)\otimes U_\varepsilon^{s}({\frak h})\otimes U_\varepsilon^{s}({\frak n}_+)\rightarrow U_\varepsilon^{s}({\frak g}).
$$
\end{proposition}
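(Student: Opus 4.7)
The strategy is to push the structure theorems for $U_\varepsilon(\g)$ (Propositions \ref{PBW1} and \ref{ue}) through the algebra isomorphism $\psi_{\{n\}}$ of Theorem \ref{newreal}. The substantive work lies in (a) verifying that the elements $e_\beta$ and $f_\beta$ actually lie in $U_\varepsilon^s(\n_+)$ and $U_\varepsilon^s(\n_-)$, and (b) extracting the exact $\varepsilon$-exponent in the commutation relation (\ref{erel}).

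For (i), write $\beta=\sum_i c_i\alpha_i$. By Proposition \ref{rootprop}, $X_\beta^+$ is a linear combination of monomials $X_{i_1}^+\cdots X_{i_k}^+$ with $\alpha_{i_1}+\cdots+\alpha_{i_k}=\beta$. The identity $\sum_i c_in_{ij}=\sum_p n_{i_pj}$ lets us split $\prod_j L_j^{\sum_i c_in_{ij}}=\prod_p\prod_j L_j^{n_{i_pj}}$, and by moving each block $\prod_j L_j^{n_{i_pj}}$ to the right of the factor $X_{i_p}^+$ using the Cartan commutation rule (which produces only scalar powers of $\varepsilon$) each monomial $X_{i_1}^+\cdots X_{i_k}^+\prod_j L_j^{\sum_i c_in_{ij}}$ is rewritten as a nonzero scalar multiple of
\[
X_{i_1}^+\prod_j L_j^{n_{i_1j}}\cdot X_{i_2}^+\prod_j L_j^{n_{i_2j}}\cdots X_{i_k}^+\prod_j L_j^{n_{i_kj}}=\psi_{\{n\}}(e_{i_1}\cdots e_{i_k}).
\]
Applying $\psi_{\{n\}}^{-1}$ shows $e_\beta\in U_\varepsilon^s(\n_+)$, and the argument for $f_\beta\in U_\varepsilon^s(\n_-)$ is parallel.

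For the commutation relation (\ref{erel}), apply $\psi_{\{n\}}$ to $f_\alpha f_\beta-\varepsilon^{(\alpha,\beta)+nd({1+s\over 1-s}P_{{\h'}^*}\alpha,\beta)}f_\beta f_\alpha$, use $X_\gamma^- L_j^M=\varepsilon^{Md_jc_j^\gamma}L_j^M X_\gamma^-$ to pull the common Cartan prefactor $\prod_j L_j^{-\sum_i(c_i^\alpha+c_i^\beta)n_{ij}}$ to the left, and compare with (\ref{qcom1}). The two resulting $\varepsilon$-prefactors attached to $X_\alpha^-X_\beta^-$ and $X_\beta^-X_\alpha^-$ differ, after relabelling of summation indices, by $\sum_{i,j}c_i^\alpha c_j^\beta(d_jn_{ij}-d_in_{ji})$; by the defining equation (\ref{eqpi}) this equals $\sum_{i,j}c_i^\alpha c_j^\beta c_{ij}=nd({1+s\over 1-s}P_{{\h'}^*}\alpha,\beta)$, which exactly cancels the twist chosen on the left and leaves $X_\alpha^-X_\beta^-\!-\varepsilon^{(\alpha,\beta)}X_\beta^-X_\alpha^-$. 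Substituting the right-hand side of (\ref{qcom1}) and running the argument of (i) backwards on each summand produces (\ref{erel}).

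Parts (ii)--(iv) are direct translations through $\psi_{\{n\}}$. For (ii), each PBW monomial $(X^-)^{\bf t}L^{\bf s}(X^+)^{\bf r}$ of Proposition \ref{PBW1} coincides, up to a nonzero scalar, with $\psi_{\{n\}}((f)^{\bf t}L^{{\bf s}'}(e)^{\bf r})$ for a unique ${\bf s}'\in\mathbb{Z}^l$, so the basis transfers. For (iii), iterating (i) shows $\psi_{\{n\}}^{-1}(x_\beta^-)$ is a nonzero scalar multiple of $\prod_j l_j^{\sum_i c_in_{ij}}f_\beta^m$; the corresponding identification for $x_\beta^+$ uses the braid-group stability of $Z_0$ from Proposition \ref{ue}(v) to expand $T_0(X_\beta^-)^m$ in PBW form and then match weight spaces. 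Consequently the polynomial algebra generated by $e_\beta^m$, $f_\beta^m$, $l_i^{\pm1}$ coincides with $Z_0$, and (iv) follows from Proposition \ref{ue}(iii) under the monomial bijection already established. The chief technical obstacle is the $\varepsilon$-power bookkeeping in (\ref{erel}); it is precisely the defining identity (\ref{eqpi}) that forces the exponent to assemble into the bilinear form $(\cdot,\cdot)+nd({1+s\over 1-s}P_{{\h'}^*}\cdot,\cdot)$.
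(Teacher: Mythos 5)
The paper itself gives no proof, only a reference to Proposition 4.2 of \cite{S9}, where the generic--$q$ analogue is established by precisely the method you use: push the PBW data of $U_\varepsilon(\g)$ through the isomorphism $\psi_{\{n\}}$ and let equation (\ref{eqpi}) organise the $\varepsilon$--exponents. Your core computation in (i) is right: writing $c_i^\alpha$, $c_i^\beta$ for the coordinates of $\alpha,\beta$ and pulling the common Cartan factor to the left, the mismatch of the two scalar prefixes is
$\sum_{i,j}c_i^\alpha c_j^\beta\bigl(d_j n_{ij}-d_i n_{ji}\bigr)=\sum_{i,j}c_i^\alpha c_j^\beta c_{ij}=nd\bigl(\tfrac{1+s}{1-s}P_{{\h'}^*}\alpha,\beta\bigr)$,
which is exactly the twist on the left of (\ref{erel}); and the fact that each monomial on the right of (\ref{qcom1}) has weight $\alpha+\beta$ ensures the Cartan factor redistributes cleanly, so the passage back to (\ref{erel}) is legitimate. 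Parts (ii) and (iv) are, as you say, routine consequences of Propositions \ref{PBW1} and \ref{ue}(iii) once the monomial correspondence $(X^\mp)^{\bf t}L^{\bf s}(X^\pm)^{\bf r}\leftrightarrow (f)^{\bf t}L^{{\bf s}'}(e)^{\bf r}$ is in place.

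The one genuinely thin spot is (iii). You identify $\psi_{\{n\}}^{-1}(x_\beta^-)$ with a nonzero $Z_0^0$--multiple of $f_\beta^m$ correctly. But for $x_\beta^+=(\varepsilon_\beta-\varepsilon_\beta^{-1})^m\,T_0(X_\beta^-)^m$, the phrase ``expand in PBW form and match weight spaces'' does not by itself show that $Z_0^+\cdot Z_0^0$ is the polynomial algebra on the $e_\alpha^m$ and $l_i^{\pm1}$: weight--matching only locates the graded piece of $U_\varepsilon(\n_+)$ containing $x_\beta^+$, not that it is a scalar times $(X_\beta^+)^m$ or that the $(X_\alpha^+)^m$ generate $Z_0^+$. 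What is actually needed, and should be invoked explicitly, is the De Concini--Kac fact that $Z_0^+=Z_0\cap U_\varepsilon(\n_+)$ is also the polynomial algebra in the $m$--th powers $(X_\alpha^+)^m$ of the root vectors attached to the chosen reduced decomposition; granting that, $\psi_{\{n\}}(e_\alpha^m)$ is a nonzero $Z_0^0$--multiple of $(X_\alpha^+)^m$ and (iii) follows. Everything else in your write--up is sound and in line with the argument the author is pointing to.
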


The proof of this proposition is similar to the proof of Proposition 4.2 in \cite{S9}.


\section{Nilpotent subalgebras and quantum groups}\label{nilpuq}

\setcounter{equation}{0}
\setcounter{theorem}{0}

In this section we define the subalgebras of $U_\varepsilon({\frak g})$ which resemble nilpotent subalgebras in
${\frak g}$ and possess non--trivial characters. We start by recalling the definition of certain normal orderings of root systems associated to Weyl group elements (see \cite{S9}, Section 5 for more details). The definition of subalgebras of $U_\varepsilon(\g)$ having non--trivial characters will be given in terms of root vectors associated to such normal orderings.

\begin{proposition}\label{pord}{\bf (\cite{S9}, Proposition 5.1)}
Let $s\in W$ be an element of the Weyl group $W$ of the pair $(\g,\h)$, $\Delta$ the root system of the pair $(\g,\h)$ Then there is a system of positive roots $\Delta_+^s$ such that decomposition (\ref{inv}) is reduced in the sense that ${l}(s)={l}(s^2)+{l}(s^1)$, where ${l}(\cdot)$ is the length function in $W$ with respect to the system of simple roots in $\Delta_+^s$, and $\Delta_{s}^s=\Delta_{s^{2}}^s\bigcup s^2(\Delta_{s^{1}}^s)$, $\Delta_{s^{-1}}^s=\Delta_{s^{1}}^s\bigcup s^1(\Delta_{s^{2}}^s)$ (disjoint unions), $\Delta_{s^{1,2}}^s=\{\alpha \in \Delta_+^s:s^{1,2}\alpha \in -\Delta_+^s\}$, $\Delta_{s}^s=\{\alpha \in \Delta_+^s:s\alpha \in -\Delta_+^s\}$. Here $s^1,s^2$ are the involutions entering decomposition (\ref{inv}), $s^1=s_{\gamma_1}\ldots s_{\gamma_n}$, $s^2=s_{\gamma_{n+1}}\ldots s_{\gamma_{l'}}$, the roots in each of the sets $\gamma_1, \ldots, \gamma_n$ and ${\gamma_{n+1}},\ldots, {\gamma_{l'}}$ are positive and mutually orthogonal.

Moreover, there is a normal ordering of the root system $\Delta_+^s$ of the following form
\begin{eqnarray}
\beta_1^1,\ldots, \beta_t^1,\beta_{t+1}^1, \ldots,\beta_{t+\frac{p-n}{2}}^1, \gamma_1,\beta_{t+\frac{p-n}{2}+2}^1, \ldots , \beta_{t+\frac{p-n}{2}+n_1}^1, \gamma_2, \nonumber \\
\beta_{t+\frac{p-n}{2}+n_1+2}^1 \ldots , \beta_{t+\frac{p-n}{2}+n_2}^1, \gamma_3,\ldots, \gamma_n, \beta_{t+p+1}^1,\ldots, \beta_{l(s^1)}^1,\ldots, \label{NO} \\
\beta_1^2,\ldots, \beta_q^2, \gamma_{n+1},\beta_{q+2}^2, \ldots , \beta_{q+m_1}^2, \gamma_{n+2}, \beta_{q+m_1+2}^2,\ldots , \beta_{q+m_2}^2, \gamma_{n+3},\ldots,  \nonumber \\
\gamma_{l'},\beta_{q+m_{l(s^2)}+1}^2, \ldots,\beta_{2q+2m_{l(s^2)}-(l'-n)}^2, \beta_{2q+2m_{l(s^2)}-(l'-n)+1}^2,\ldots, \beta_{l(s^2)}^2, \nonumber \\
\beta_1^0, \ldots, \beta_{D_0}^0, \nonumber
\end{eqnarray}
where
\begin{eqnarray*}
\{\beta_1^1,\ldots, \beta_t^1,\beta_{t+1}^1, \ldots,\beta_{t+\frac{p-n}{2}}^1, \gamma_1,\beta_{t+\frac{p-n}{2}+2}^1, \ldots , \beta_{t+\frac{p-n}{2}+n_1}^1, \gamma_2, \nonumber \\
\beta_{t+\frac{p-n}{2}+n_1+2}^1 \ldots , \beta_{t+\frac{p-n}{2}+n_2}^1, \gamma_3,\ldots, \gamma_n, \beta_{t+p+1}^1,\ldots, \beta_{l(s^1)}^1\}=\Delta_{s^1}^s,
\end{eqnarray*}
\begin{eqnarray*}
\{\beta_{t+1}^1, \ldots,\beta_{t+\frac{p-n}{2}}^1, \gamma_1,\beta_{t+\frac{p-n}{2}+2}^1, \ldots , \beta_{t+\frac{p-n}{2}+n_1}^1, \gamma_2, \nonumber \\
\beta_{t+\frac{p-n}{2}+n_1+2}^1 \ldots , \beta_{t+\frac{p-n}{2}+n_2}^1, \gamma_3,\ldots, \gamma_n\}=\{\alpha\in \Delta_+^s|s^1(\alpha)=-\alpha\}=\Delta^s_{s^1=-1},
\end{eqnarray*}
\begin{eqnarray*}
\{\beta_1^2,\ldots, \beta_q^2, \gamma_{n+1},\beta_{q+2}^2, \ldots , \beta_{q+m_1}^2, \gamma_{n+2}, \beta_{q+m_1+2}^2,\ldots , \beta_{q+m_2}^2, \gamma_{n+3},\ldots,  \nonumber \\
\gamma_{l'},\beta_{q+m_{l(s^2)}+1}^2, \ldots,\beta_{2q+2m_{l(s^2)}-(l'-n)}^2, \beta_{2q+2m_{l(s^2)}-(l'-n)+1}^2,\ldots, \beta_{l(s^2)}^2\}=\Delta_{s^2}^s,
\end{eqnarray*}
\begin{eqnarray*}
\{\gamma_{n+1},\beta_{q+2}^2, \ldots , \beta_{q+m_1}^2, \gamma_{n+2}, \beta_{q+m_1+2}^2,\ldots , \beta_{q+m_2}^2, \gamma_{n+3},\ldots,  \nonumber \\
\gamma_{l'},\beta_{q+m_{l(s^2)}+1}^2, \ldots,\beta_{2q+2m_{l(s^2)}-(l'-n)}^2\}=\{\alpha\in \Delta_+^s|s^2(\alpha)=-\alpha\}=\Delta^s_{s^2=-1},
\end{eqnarray*}
\begin{equation*}
\{\beta_1^0, \ldots, \beta_{D_0}^0\}=\{\alpha\in \Delta_+^s|s(\alpha)=\alpha\}.
\end{equation*}

The length of the ordered segment $\Delta_{\m_+}\subset \Delta$ in normal ordering (\ref{NO}),
\begin{eqnarray}
\Delta_{\m_+}=\gamma_1,\beta_{t+\frac{p-n}{2}+2}^1, \ldots , \beta_{t+\frac{p-n}{2}+n_1}^1, \gamma_2, \beta_{t+\frac{p-n}{2}+n_1+2}^1 \ldots , \beta_{t+\frac{p-n}{2}+n_2}^1, \nonumber \\
\gamma_3,\ldots, \gamma_n, \beta_{t+p+1}^1,\ldots, \beta_{l(s^1)}^1,\ldots, \beta_1^2,\ldots, \beta_q^2, \label{dn} \\
\gamma_{n+1},\beta_{q+2}^2, \ldots , \beta_{q+m_1}^2, \gamma_{n+2}, \beta_{q+m_1+2}^2,\ldots , \beta_{q+m_2}^2, \gamma_{n+3},\ldots, \gamma_{l'}, \nonumber
\end{eqnarray}
is equal to
\begin{equation}\label{dimm}
D-(\frac{l(s)-l'}{2}+D_0),
\end{equation}
where $D$ is the number of roots in $\Delta_+^s$, $l(s)$ is the length of $s$ and $D_0$ is the number of positive roots fixed by the action of $s$.

For any two roots $\alpha, \beta\in \Delta_{\m_+}$ such that $\alpha<\beta$ the sum $\alpha+\beta$ cannot be represented as a linear combination $\sum_{k=1}^qc_k\gamma_{i_k}$, where $c_k\in \mathbb{N}$ and $\alpha<\gamma_{i_1}<\ldots <\gamma_{i_k}<\beta$.
\end{proposition}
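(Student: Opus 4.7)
The plan is to construct a reduced decomposition of the longest element $w_0 \in W$ whose associated normal ordering has the form (\ref{NO}), then read off the dimension and the orthogonality statements from this ordering. The main tools are the bijection between normal orderings of $\Delta_+$ and reduced decompositions of $w_0$ recalled earlier in the text, together with the freedom afforded by elementary transpositions in rank-two subsystems as in (\ref{rank2}).

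The first step is to verify that $\bar h$ defined by (\ref{hwb}) is regular, so that $\Delta_+ = \{\alpha : \bar h(\alpha)>0\}$ is a genuine positive system; this was essentially established before the statement using (\ref{cond}). Next I would show that the three sets $\Delta_{s^1}$, $\Delta_{s^2}$, $\Delta_0$ partition $\Delta_+$. For a positive root $\alpha \notin \Delta_0$, the orthogonality of the $\gamma_j$ within each involution yields the explicit formula $s^i(\alpha) = \alpha - \sum_j \frac{2(\alpha,\gamma_j)}{(\gamma_j,\gamma_j)}\gamma_j$; combining this with the sign criterion (\ref{wc}), which is determined by the slot $\overline{\Delta}_{i_k}$ in which $\alpha$ lies, forces $\alpha$ into exactly one of $\Delta_{s^1}$ or $\Delta_{s^2}$, and simultaneously yields the length identity $l(s^1) + l(s^2) + D_0 = D$.

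The central combinatorial step is to produce, for each involution $s^i$, a reduced expression in the simple reflections of $W$ whose associated normal ordering of $\Delta_{s^i}$ is of the prescribed shape, with the $\gamma_j$ sitting in the staircase positions inside the block $\{\alpha \in \Delta_+ : s^i(\alpha) = -\alpha\}$. Since the $\gamma_j$ participating in $s^i$ are positive and mutually orthogonal, the reflections $s_{\gamma_j}$ commute, and each individual reflection $s_{\gamma_j}$ admits a palindromic reduced factorization $s_{k_1}\cdots s_{k_p} s_{k_{p+1}} s_{k_p}\cdots s_{k_1}$ with $\gamma_j = s_{k_1}\cdots s_{k_p}(\alpha_{k_{p+1}})$, whose associated normal ordering of $\Delta_{s_{\gamma_j}}$ has $\gamma_j$ at the center. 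Interleaving these palindromes, using the commutativity of the $s_{\gamma_j}$ together with elementary transpositions on the remaining rank-two subsystems, produces the desired normal ordering of $\Delta_{s^i}$. Concatenating the reduced expressions of $s^1$ and $s^2$, which is reduced because $l(s^1)+l(s^2) = D - D_0 = l(s)$, and then appending any reduced expression of $s^{-1}w_0$ (whose action on $\Delta_+$ fixes precisely $\Delta_0$) gives a reduced expression of $w_0$ realizing (\ref{NO}). The length formula (\ref{dimm}) is then an immediate count based on the positions of $\gamma_1$ and $\gamma_{l'}$ inside the palindromic factorizations.

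The last statement, about sums of roots inside $\Delta_{\m_+}$, follows by a direct inner-product argument: if $\alpha + \beta = \sum_{k=1}^q c_k \gamma_{i_k}$ with $c_k \in \mathbb{N}$ and $\alpha < \gamma_{i_1} < \cdots < \gamma_{i_q} < \beta$ in (\ref{NO}), then pairing both sides with each $\gamma_{i_k}^\vee$ and exploiting the mutual orthogonality of the $\gamma_{i_k}$ within their group produces sign constraints on $(\alpha,\gamma_{i_k})$ and $(\beta,\gamma_{i_k})$ incompatible with the relative positions of $\alpha$, $\beta$, and the $\gamma_{i_k}$ in a normal ordering. The principal obstacle is the combinatorial step above, namely explicitly interleaving the palindromic factorizations so as to place the $\gamma_j$ in exactly the prescribed slots; once that is done, the remaining claims follow from bookkeeping and elementary root-system geometry.
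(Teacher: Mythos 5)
The paper does not prove this proposition. It is quoted verbatim from \cite{S9}, Proposition 5.1, so there is no argument here for you to be compared against; the best I can do is assess your sketch on its own terms, and there I see two places where essential work is asserted rather than done.

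First, the partition $\Delta_+ = \Delta_{s^1} \sqcup \Delta_{s^2} \sqcup \Delta_0$ and the attendant length identity $l(s^1)+l(s^2)=D-D_0=l(s)$ are not consequences of the reflection formula for $s^i$ together with the sign criterion (\ref{wc}) as you claim. Criterion (\ref{wc}) controls the sign of a root $\alpha$ via the block $\overline{\Delta}_{i_k}$ to which it belongs, and these blocks are $s$-invariant by construction; they are \emph{not} in general $s^1$- or $s^2$-invariant, since neither involution separately preserves the spectral decomposition (\ref{hdec}). Consequently it is not immediate from (\ref{wc}) where $s^1\alpha$ and $s^2\alpha$ sit, and showing that for $\alpha\notin\Delta_0$ exactly one of them is negative — which is exactly what your partition claim says — requires a dedicated argument using the particular choice of $\bar h$ and condition (\ref{cond}). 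Without that step, the concatenation $l(s^1)+l(s^2)+l(s^{-1}w_0)=l(w_0)$ you rely on to build a reduced word for $w_0$ has no justification, and the whole construction collapses.

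Second, the interleaving of palindromic factorizations is significantly underdetermined. For a single reflection $s_{\gamma_j}$ a palindromic reduced word does place $\gamma_j$ at the center of $\Delta_{s_{\gamma_j}}$, but for the product $s^1=s_{\gamma_1}\cdots s_{\gamma_n}$ one has $l(s^1)<\sum_j l(s_{\gamma_j})$ in general, so the palindromes cannot simply be concatenated: cancellations occur and there is no a priori reason the $\gamma_j$ survive in a contiguous block, let alone with the precise staircase spacing required by (\ref{NO}) inside the segment $\Delta_+\cap\mathrm{span}(\gamma_1,\ldots,\gamma_n)$. Invoking "elementary transpositions" to rearrange things is appealing but begs the question, since one must show the target ordering is reachable by such moves. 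A similar issue affects your final inner-product argument: the $\gamma_{i_k}$ appearing in the hypothetical relation $\alpha+\beta=\sum c_k\gamma_{i_k}$ can be drawn from both $\{\gamma_1,\ldots,\gamma_n\}$ and $\{\gamma_{n+1},\ldots,\gamma_{l'}\}$, which are not mutually orthogonal to one another, so "mutual orthogonality within their group" does not directly give you the sign constraints you want. In short: the overall architecture (regularity of $\bar h$, partitioning $\Delta_+$, building a reduced word of $w_0$ block by block, then bookkeeping and a positivity argument) is reasonable, but the two combinatorial cores — the partition/length-additivity step and the explicit construction of the reduced word with the $\gamma_j$ in the prescribed slots — are precisely the hard content of \cite{S9}, Proposition 5.1, and your sketch treats them as routine.
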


\begin{remark}
In case when $s=s^1$ is an involution the last root in the segment $\Delta_{\m_+}$ is the root preceding $\beta_1^0$ in normal ordering (\ref{NO}).
\end{remark}

We shall also need another system of positive roots associated to (the conjugacy class of) the Weyl group element $s$. In order to define it we need to recall the definition of a circular normal ordering of the root system ${\Delta}$.

Let $\beta_{1}, \beta_{2}, \ldots, \beta_{D}$ be a normal ordering
of a positive root system. Then following \cite{KT3} one can introduce the corresponding circular normal ordering of the root system ${\Delta}$ where
the roots in ${\Delta}$ are located on a circle in
the following way
\begin{center}
\setlength{\unitlength}{0.6mm}
     \begin{picture}(180,120)(-40,0)
     \put(0,50){\makebox(0,0){$\beta_1$}}
     \put(4,69){\makebox(0,0){$\beta_2$}}
     \put(15,85){\circle*{1.5}}
     \put(31,96){\circle*{1.5}}
     \put(50,100){\circle*{1.5}}
     \put(69,96){\circle*{1.5}}
     \put(85,85){\circle*{1.5}}
     \put(96,69){\makebox(0,0){$\beta_D$}}
     \put(100,50){\makebox(0,0){$-\beta_1$}}
     \put(96,31){\makebox(0,0){$-\beta_2$}}
     \put(85,15){\circle*{1.5}}
     \put(69,4){\circle*{1.5}}
     \put(50,0){\circle*{1.5}}
     \put(31,4){\circle*{1.5}}
     \put(15,15){\circle*{1.5}}
     \put(4,31) {\makebox(0,0){-$\beta_D$}}
     \put(64,88){\vector (3,-2){10}}
     \put(36,12){\vector (-3,2){10}}
     \end{picture}
\end{center}
\begin{center}
Circular normal ordering of a root system.
\end{center}

Let $\alpha,\beta\in \Delta$. One says that the segment $[\alpha, \beta]$ of the circle
is minimal if it does not contain the opposite roots $-\alpha$ and $-\beta$ and the root $\beta$ follows after $\alpha$ on the circle above, the circle being oriented clockwise.
In that case one also says that $\alpha < \beta$ in the sense of the circular normal ordering,
\begin{equation}\label{noc}
\alpha < \beta \Leftrightarrow {\rm the ~segment}~ [\alpha, \beta]~{\rm  of~ the ~circle~
is~ minimal}.
\end{equation}

Later we shall need the following property of minimal segments which is a direct consequence of Proposition 3.3 in \cite{kh-t}.
\begin{lemma}\label{minsegm}
Let $[\alpha, \beta]$ be a minimal segment in a circular normal ordering of a root system $\Delta$. Then if $\alpha+\beta$ is a root we have
$$
\alpha<\alpha+\beta<\beta.
$$
\end{lemma}

Note that any segment in a circular normal ordering of $\Delta$ of length equal to the number of positive roots is a system of positive roots.

Now consider the circular normal ordering of $\Delta$ corresponding to the system of positive roots $\Delta_+^s$ and to its normal ordering introduced in Proposition \ref{pord}. The segment which consists of the roots $\alpha$ satisfying $\gamma_1\leq \alpha < -\gamma_1$ is a system of positive roots in $\Delta$ as its length is equal to the number of positive roots and it is closed under addition of roots by Lemma \ref{minsegm}.

The system of positive roots $\Delta_+$ introduced this way and equipped with the normal ordering induced by the circular normal ordering is called the normally ordered system of positive roots associated to the (conjugacy class of) the Weyl group element $s\in W$.

The linear subspace of $\g$ generated by the root vectors $X_{\alpha}$ ($X_{-\alpha}$), $\alpha\in \Delta_{\m_+}$ is in fact a Lie subalgebra ${\m_+}\subset \g$ (${\m_-}\subset \g$).
Let $\b_+$ be the Borel subalgebra associated to $\Delta_+$ and $\b_-$ is the opposite Borel subalgebra. Denote by $\n_\pm$ the nilradicals of $\b_\pm$. Let $H,N_+, N_-,B_+,B_-$ be the Cartan subgroup, the maximal unipotent subgroups and the Borel subgroups of $G$ which correspond to the Lie subalgebras ${\frak h},{{\frak n}_+},{\frak n}_-,{\frak b}_+$ and ${\frak b}_-,$ respectively.
Note that by definition $\Delta_{\m_+} \subset \Delta_+$, and hence ${\m_\pm}\subset \n_\pm$.

Now we can define the subalgebras of $U_\varepsilon({\frak g})$ which resemble nilpotent subalgebras in
${\frak g}$ and possess non--trivial characters.
\begin{theorem}\label{qnil}
Let $s\in W$ be an element of the Weyl group $W$ of the pair $(\g,\h)$, $\Delta$ the root system of the pair $(\g,\h)$. Fix a decomposition (\ref{inv}) of $s$ and let $\Delta_+$ be a system of positive roots associated to $s$. Assume that $\varepsilon^{2d_i}\neq 1$, $\varepsilon^4\neq 1$ if $\g$ is of type $G_2$ and that $\varepsilon^{nd-1}=1$, where $d$ and $n$ are introduced in Section \ref{wqreal}.
Let $U_\varepsilon^{s}({\frak g})$ be the realization of the quantum group $U_\varepsilon({\frak g})$ associated to $s$. Let $f_\beta\in U_\varepsilon^{s}({\n_-})$, $\beta \in \Delta_+$ be the root vectors associated to the corresponding normal ordering of $\Delta_+$.

Then elements $f_\beta\in U_\varepsilon^{s}({\n_-})$, $\beta \in \Delta_{\m_+}$, where $\Delta_{\m_+}\subset \Delta$ is ordered segment (\ref{dn}), generate a subalgebra $U_\varepsilon^{s}({\frak m}_-)\subset U_\varepsilon^{s}({\frak g})$.
The elements
$f^{\bf r}=f_{\beta_D}^{r_D}\ldots f_{\beta_1}^{r_1}$, $r_i\in \mathbb{N}$, $i=1,\ldots D$ and $r_i$ can be strictly positive only if $\beta_i\in \Delta_{\m_+}$, form a linear
basis of $U_\varepsilon^{s}({\frak m}_-)$.

Moreover the map $\chi^s:U_\varepsilon^{s}({\frak m}_-)\rightarrow \mathbb{C}$ defined on generators by
\begin{equation}\label{char}
\chi^s(f_\beta)=\left\{ \begin{array}{ll} 0 & \beta \not \in \{\gamma_1, \ldots, \gamma_{l'}\} \\ c_i & \beta=\gamma_i, c_i\in \mathbb{C}
\end{array}
\right  .
\end{equation}
is a character of $U_\varepsilon^{s}({\frak m}_-)$.
\end{theorem}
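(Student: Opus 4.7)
The plan is to deduce Theorem \ref{qnil} from the commutation relations (\ref{erel}) in Proposition \ref{rootss}, combined with the fact that $\Delta_{\m_+}$ is an ordered segment of the normal ordering (\ref{NO}) provided by Proposition \ref{pord}, together with Lemma \ref{tmatrel}. All the combinatorics needed is already packaged into these three results.

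First I would verify that the monomials $f^{\bf r} = f_{\beta_D}^{r_D}\cdots f_{\beta_1}^{r_1}$ with $r_i > 0$ only for $\beta_i \in \Delta_{\m_+}$ span a subalgebra with the claimed basis. Because $\Delta_{\m_+}$ is a consecutive interval in the normal ordering (\ref{NO}), any root $\delta$ lying strictly between two roots $\alpha, \beta \in \Delta_{\m_+}$ is itself in $\Delta_{\m_+}$. Consequently, for $\alpha < \beta$ in $\Delta_{\m_+}$, the right-hand side of (\ref{erel}) lies in the span of ordered monomials in $\{f_\delta : \delta \in \Delta_{\m_+}\}$, which is already in PBW form with respect to the normal ordering of $\Delta_+$. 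Standard induction on the lexicographic order of exponent sequences then reduces any word in these generators to a linear combination of the claimed ordered monomials; linear independence is inherited from the PBW basis of $U_\varepsilon^s(\n_-)$ in Proposition \ref{rootss}(ii).

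The heart of the argument is to show that $\chi^s$ is compatible with the relations (\ref{erel}) restricted to $\Delta_{\m_+}$. The final assertion of Proposition \ref{pord} states that for $\alpha < \beta$ in $\Delta_{\m_+}$, the weight $\alpha + \beta$ admits no decomposition $\sum c_k \gamma_{i_k}$ with $\alpha < \gamma_{i_1} < \cdots < \gamma_{i_q} < \beta$. Since each term on the right-hand side of (\ref{erel}) has total weight $\alpha + \beta$, every monomial $f_{\delta_n}^{k_n}\cdots f_{\delta_1}^{k_1}$ appearing there must contain at least one factor $f_{\delta_i}$ with $\delta_i \notin \{\gamma_1, \ldots, \gamma_{l'}\}$, and such a factor is annihilated by $\chi^s$ by the definition (\ref{char}). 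On the left-hand side, if either of $\alpha, \beta$ lies outside $\{\gamma_1, \ldots, \gamma_{l'}\}$ then both terms vanish under $\chi^s$ trivially. The nontrivial case is $\alpha = \gamma_i$, $\beta = \gamma_j$ with $i < j$: Lemma \ref{tmatrel} gives $\left( \frac{1+s}{1-s} P_{{\h'}^*}\gamma_i, \gamma_j \right) = -(\gamma_i, \gamma_j)$, so the $q$-commutator coefficient in (\ref{erel}) equals $\varepsilon^{(1-nd)(\gamma_i,\gamma_j)}$, which is $1$ by the hypothesis $\varepsilon^{nd-1} = 1$ together with the integrality of $(\gamma_i,\gamma_j)$. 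Hence the left-hand side reduces to $c_i c_j - c_j c_i = 0$, matching the right-hand side and confirming that $\chi^s$ extends to an algebra homomorphism $U_\varepsilon^s(\m_-) \to \mathbb{C}$.

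The main obstacle I anticipate is the $\gamma_i,\gamma_j$ computation above: one must carefully combine Lemma \ref{tmatrel} with the exponent appearing in (\ref{erel}) and the root-of-unity hypothesis to see that the scalar is exactly $1$. Everything else is bookkeeping once the segment property of $\Delta_{\m_+}$ and the non-representability assertion from Proposition \ref{pord} are exploited.
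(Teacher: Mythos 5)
Your proposal is correct and follows essentially the same route as the paper: the subalgebra structure and PBW basis for $U_\varepsilon^s(\m_-)$ come from the segment property of $\Delta_{\m_+}$ together with the commutation relations (\ref{erel}) and Proposition \ref{rootss}(ii), the vanishing of $\chi^s$ on the right-hand sides of (\ref{erel}) comes from the non-representability statement in Proposition \ref{pord}, and the crucial scalar for $\alpha=\gamma_i$, $\beta=\gamma_j$ ($i<j$) is computed via Lemma \ref{tmatrel} combined with $\varepsilon^{nd-1}=1$. Your spelling-out of why each monomial on the right-hand side must contain a factor $f_\delta$ with $\delta\notin\{\gamma_1,\ldots,\gamma_{l'}\}$ is slightly more explicit than the paper, but the substance is identical.
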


\begin{proof}
The first statement of the theorem follows straightforwardly from commutation relations (\ref{erel}) and Proposition \ref{rootss}.

In order to prove that the map $\chi^s:U_\varepsilon^{s}({\frak m}_-)\rightarrow \mathbb{C}$ defined by (\ref{char}) is a character of $U_\varepsilon^{s}({\frak m}_-)$ we show that all relations (\ref{erel}) for $f_\alpha,~f_\beta$ with $\alpha,\beta \in \Delta_{\m_+}$, which are defining relations in the subalgebra $U_\varepsilon^{s}({\frak m}_-)$ by part (ii) of Proposition \ref{rootss},  belong to the kernel of $\chi^s$. By definition the only generators of $U_\varepsilon^{s}({\frak m}_-)$ on which $\chi^s$ may not vanish are $f_{\gamma_i}$, $i=1,\ldots,l'$. By the last statement in Proposition \ref{pord} for any two roots $\alpha, \beta\in \Delta_{\m_+}$ such that $\alpha<\beta$ the sum $\alpha+\beta$ can not be represented as a linear combination $\sum_{k=1}^qc_k\gamma_{i_k}$, where $c_k\in \mathbb{N}$ and $\alpha<\gamma_{i_1}<\ldots <\gamma_{i_k}<\beta$. Hence for any two roots $\alpha, \beta\in \Delta_{\m_+}$ such that $\alpha<\beta$ the value of the map $\chi^s$ on the r.h.s. of the corresponding commutation relation (\ref{erel}) is equal to zero.

Therefore it suffices to prove that
$$
\chi^s(f_{\gamma_i}f_{\gamma_j} - \varepsilon^{(\gamma_i,\gamma_j)+nd({1+s \over 1-s}P_{{\h'}^*}\gamma_i,\gamma_j)}f_{\gamma_j}f_{\gamma_j})=c_ic_j(1-\varepsilon^{(\gamma_i,\gamma_j)+nd({1+s \over 1-s}P_{{\h'}^*}\gamma_i,\gamma_j)})=0,~i<j.
$$

Since $\varepsilon^{nd-1}=1$ and $({1+s \over 1-s}P_{{\h'}^*}\gamma_i,\gamma_j)$ are integer numbers for any $i,j=1,\ldots ,l'$, the last identity always holds provided $(\gamma_i,\gamma_j)+({1+s \over 1-s}P_{{\h'}}^*\gamma_i, \gamma_j)=0$ for $i<j$. As we saw in Lemma \ref{tmatrel} this is indeed the case. This completes the proof.
\end{proof}


\section{Quantum groups at roots of unity}\label{1root}

\setcounter{equation}{0}
\setcounter{theorem}{0}

Let $m$ be an odd positive integer number, and $m>d_i$ is coprime to all $d_i$ for all $i$, $\varepsilon$ a primitive $m$-th root of unity.
In this section, following \cite{ChP}, Section 9.2, we recall some results on the structure of the algebra $U_\varepsilon({\frak g})$. We keep the notation introduced in Section \ref{notation}.

Let $Z_\varepsilon$ be the center of $U_\varepsilon({\frak g})$.

\begin{proposition}{\bf (\cite{DK}, Corollary 3.3, \cite{DKP1}, Theorems 3.5, 7.6 and Proposition 4.5)}\label{ue}
Fix the normal ordering in the positive root system $\Delta_+$ corresponding a reduced decomposition $w_0=s_{i_1}\ldots s_{i_D}$ of the longest element $w_0$ of the Weyl group $W$ of $\g$ and let $X_{\alpha}^\pm$ be the corresponding root vectors in $U_\varepsilon({\frak g})$, and $X_{\alpha}$ the corresponding root vectors in $\g$. Let $x_{\alpha}^-=(\varepsilon_{\alpha}-\varepsilon_{\alpha}^{-1})^m(X_{\alpha}^-)^m$, $x_{\alpha}^+=(\varepsilon_{\alpha}-\varepsilon_{\alpha}^{-1})^mT_0(X_{\alpha}^-)^m$, where $T_0=T_{i_1}\ldots T_{i_D}$, $\alpha\in \Delta_+$ and $l_i=L_i^m$, $i=1,\ldots ,l$ be elements of $U_\varepsilon({\frak g})$.

Then the following statements are true.

(i) The elements $x_{\alpha}^\pm$, $\alpha\in \Delta_+$, $l_i$, $i=1,\ldots ,l$ lie in $Z_\varepsilon$.

(ii) Let $Z_0$  ($Z_0^\pm$ and $Z_0^0$) be the subalgebras of $Z_\varepsilon$ generated by the $x_{\alpha}^\pm$ and the $l_i^{\pm 1}$ (respectively by the $x_{\alpha}^\pm$ and by the $l_i^{\pm 1}$). Then $Z_0^\pm\subset U_\varepsilon({\frak n}_\pm)$, $Z_0^0\subset U_\varepsilon({\frak h})$, $Z_0^\pm$ is the polynomial algebra with generators $x_{\alpha}^\pm$,
$Z_0^0$ is the algebra of Laurent polynomials in the $l_i$, $Z_0^\pm=U_\varepsilon({\frak n}_\pm)\bigcap Z_0$, and multiplication defines an isomorphism of algebras
$$
Z_0^-\otimes Z_0^0 \otimes Z_0^+ \rightarrow Z_0.
$$
The subalgebra $Z_0$ is independent of the choice of the reduced decomposition $w_0=s_{i_1}\ldots s_{i_D}$.

(iii) $U_\varepsilon({\frak g})$ is a free $Z_0$--module with basis the set of monomials $(X^+)^{\bf r}L^{\bf s}(X^-)^{\bf t}$ in the statement of Proposition \ref{PBW1} for which $0\leq r_k,t_k,s_i<m$ for $i=1,\ldots ,l$, $k=1,\ldots ,D$.

(iv) ${\rm Spec}(Z_0)=\mathbb{C}^{2D}\times(\mathbb{C}^*)^l$ is a complex affine space of dimension equal to ${\rm dim}~\g$.

(v) The subalgebra $Z_0$ is preserved by the action of the braid group automorphisms $T_i$.

(vi) Let $G$ be the connected simply connected Lie group corresponding to the Lie algebra $\g$ and $G^*_0$ the solvable algebraic subgroup in $G\times G$ which consists of elements of the form $(L_+,L_-)\in G\times G$,
$$
(L_+,L_-)=(t,t^{-1})(n_+,n_-),~n_\pm \in N_\pm,~t\in H.
$$

Then ${\rm Spec}(Z_0^0)$ can be naturally identified with the maximal torus $H$ in $G$, and the map
$$
\widetilde{\pi}: {\rm Spec}(Z_0)={\rm Spec}(Z_0^+)\times {\rm Spec}(Z_0^0) \times {\rm Spec}(Z_0^-)\rightarrow G^*_0,
$$
$$
\widetilde{\pi}(u_+,t,u_-)=(t{\bf X}^+(u_+),t^{-1}{\bf X}^-(u_-)^{-1}),~u_\pm\in {\rm Spec}(Z_0^\pm),~t\in {\rm Spec}(Z_0^0),
$$
$$
{\bf X}^\pm: {\rm Spec}(Z_0^\pm) \rightarrow N_\pm,
$$
$$
{\bf X}^-=\exp(x_{\beta_D}^-X_{-\beta_D})\exp(x_{\beta_{D-1}}^-X_{-\beta_{D-1}})\ldots \exp(x_{\beta_1}^-X_{-\beta_1}),
$$
$$
{\bf X}^+=\exp(x_{\beta_D}^+T_0(X_{-\beta_D}))\exp(x_{\beta_{D-1}}^+T_0(X_{-\beta_{D-1}}))\ldots \exp(x_{\beta_1}^+T_0(X_{-\beta_1})),
$$
where $x_{\beta_i}^\pm$ should be regarded as complex-valued functions on ${\rm Spec}(Z_0)$, is an isomorphism of varieties independent of the choice of reduced decomposition of $w_0$.
\end{proposition}

From parts (ii) and (iii) of Proposition \ref{ue} we immediately deduce similar properties for the algebra $U_\varepsilon^{s}({\frak g})$.
\begin{proposition}\label{uq1z}
(i) The subalgebra $\psi_{\{ n_{ij}\}}^{-1}(Z_0)\subset U_\varepsilon^{s}({\frak g})$ is the tensor product of the polynomial algebra with generators $e_{\alpha}^m$, $f_{\alpha}^m$, $\alpha \in \Delta_+$ and of the algebra of Laurent polynomials in $l_i$, $i=1,\ldots,l$.

(ii) $U_\varepsilon^s({\frak g})$ is a free $\psi_{\{ n_{ij}\}}^{-1}(Z_0)$--module with basis the set of monomials $(f)^{\bf r}L^{\bf s}(e)^{\bf t}$ for which $0\leq r_k,t_k,s_i<m$ for $i=1,\ldots ,l$, $k=1,\ldots ,D$.
\end{proposition}

\begin{remark}
In fact ${\rm Spec}(Z_0)$ carries a natural structure of a Poisson--Lie group, and the map $\widetilde{\pi}$ is an isomorphism of algebraic Poisson--Lie groups if $G^*_0$ is regarded as the dual Poisson--Lie group to the Poisson--Lie group $G$ equipped with the standard Sklyanin bracket (see \cite{DKP1}, Theorem 7.6). We shall not need this fact in this paper.
\end{remark}

Let ${\bf K}:{\rm Spec}(Z_0^0)\rightarrow H$ be the map defined by ${\bf K}(h)=h^2$, $h\in H$.

\begin{proposition}\label{LO}{\bf (\cite{DKP1}, Corollary 4.7)}
Let $G^0=N_-HN_+$ be the big cell in $G$. Then the map
$$
\pi={\bf X}^-{\bf K}{\bf X}^+:{\rm Spec}(Z_0)\rightarrow G^0
$$
is independent of the choice of reduced decomposition of $w_0$, and is an unramified covering of degree $2^l$.
\end{proposition}

Denote by $\lambda_0:G^*_0\rightarrow G^0$ the map defined by $\lambda_0(L_+,L_-)=L_-^{-1}L_+$. Then obviously
$\pi=\lambda_0\circ \widetilde{\pi}$.

Define derivations $\underline{x}_i^\pm$ of $U_{\mathcal{A}}(\g)$ by
\begin{equation}\label{qder}
\underline{x}_i^+(u)=\left[ \frac{(X_i^+)^m}{[m]_{q_i}!},u \right],~\underline{x}_i^-(u)=T_0\underline{x}_i^+T_0^{-1}(u),~i=1,\ldots ,l,~u\in U_{\mathcal{A}}(\g).
\end{equation}

Let $\widehat{Z}_0$ be the algebra of formal power series in the $x_{\alpha}^\pm$, $\alpha\in \Delta_+$, and the $l_i^{\pm1}$, $i=1,\ldots ,l$, which define holomorphic functions on ${\rm Spec}(Z_0)=\mathbb{C}^{2D}\times(\mathbb{C}^*)^l$. Let
$$
\widehat{U}_\varepsilon(\g)=U_\varepsilon(\g)\otimes_{Z_0}\widehat{Z}_0,~~
\widehat{Z}_\varepsilon=Z_\varepsilon\otimes_{Z_0}\widehat{Z}_0.
$$

\begin{proposition}{\bf (\cite{DK}, Propositions 3.4, 3.5, \cite{DKP1}, Proposition 6.1, Theorem 6.6)}\label{qcoadj}

(i)On specializing to $q=\varepsilon$, (\ref{qder}) induces a well--defined derivation $\underline{x}_i^\pm$ of ${U}_\varepsilon(\g)$.

(ii)The series
$$
\exp(t\underline{x}_i^\pm)=\sum_{k=0}^\infty \frac{t^k}{k!}(\underline{x}_i^\pm)^k
$$
converge for all $t\in \mathbb{C}$ to a well--defined automorphism of the algebra $\widehat{U}_\varepsilon(\g)$.

(iii)Let $\mathcal{G}$ be the group of automorphisms generated by the one--parameter groups $\exp(t\underline{x}_i^\pm)$, $i=1,\ldots, l$. The action of $\mathcal{G}$ on $\widehat{U}_\varepsilon(\g)$ preserves the subalgebras $\widehat{Z}_\varepsilon$ and $\widehat{Z}_0$, and hence $\mathcal{G}$ acts by holomorphic automorphisms on the complex algebraic varieties ${\rm Spec}(Z_\varepsilon)$ and ${\rm Spec}(Z_0)$.

(iv)Let $\mathcal{O}$ be a conjugacy class in $G$. The intersection $\mathcal{O}^0=\mathcal{O}\bigcap G^0$ is a smooth connected variety, and the variety $\pi^{-1}(\mathcal{O}^0)$ is a $\mathcal{G}$--orbit in ${\rm Spec}(Z_0)$.

(v)If $\mathcal{P}$ is a $\mathcal{G}$--orbit in ${\rm Spec}(Z_0)$ then the connected components of $\tau^{-1}(\mathcal{P})$ are $\mathcal{G}$--orbits in ${\rm Spec}(Z_\varepsilon)$.
\end{proposition}

Given a homomorphism $\eta:{Z}_0 \rightarrow \mathbb{C}$, let
$$
U_\eta(\g)={U}_\varepsilon(\g)/I_\eta,
$$
where $I_\eta$ is the ideal in ${U}_\varepsilon(\g)$ generated by elements $z-\eta(z)$, $z\in Z_0$. By part (iii) of Proposition \ref{ue} $U_\eta(\g)$ is an algebra of dimension $m^{{\rm dim}~\g}$ with linear basis the set of monomials $(X^+)^{\bf r}L^{\bf s}(X^-)^{\bf t}$ for which $0\leq r_k,t_k,s_i<m$ for $i=1,\ldots ,l$, $k=1,\ldots ,D$.

If $\widetilde{g}\in \mathcal{G}$ then for any $\eta\in {\rm Spec}(Z_0)$ we have $\widetilde{g}\eta\in {\rm Spec}(Z_0)$ by part (iii) of Proposition \ref{qcoadj}, and by part (ii) of the same proposition $\widetilde{g}$ induces an isomorphism of algebras,
$$
\widetilde{g}:U_\eta(\g) \rightarrow U_{\widetilde{g}\eta}(\g).
$$


\section{Whittaker vectors}\label{WHITT}

\setcounter{equation}{0}
\setcounter{theorem}{0}

In this section we introduce the notion of Whittaker vectors for modules over quantum groups at roots of unity and prove an analogue of the Engel theorem for them. We start by studying some properties of quantum groups at roots of unity.

From now on we fix an element $s\in W$. Let $\Delta_+$ be a system of positive roots associated to $s^{-1}$. We also fix  positive integer $d$ such that $p_{ij}\in \frac{1}{d}\mathbb{Z}$ for any $i<j$ (or $i>j$), $i,j=1,\ldots ,l$, where the numbers $p_{ij}$ are defined by formula (\ref{pij}). We shall always assume that $m>d_i$ is odd and coprime to all $d_i$, $i=1,\ldots ,l$ and that $d$ and $m$ are coprime. The last condition is equivalent to the existence of an integer $n$ such that $\varepsilon^{nd-1}=1$. We fix an integer valued solution $n_{ij}$ to equations (\ref{eqpi}) and identify the algebra $U_\varepsilon^{s^{-1}}({\frak g})$ associated to the Weyl group element $s^{-1}$ with $U_\varepsilon({\frak g})$ using Theorem \ref{newreal} and the solution $-n_{ij}$ to equations (\ref{eqpi}). Using this identification $U_\varepsilon^{s^{-1}}({\frak m}_-)$ can be regarded as a subalgebra in $U_\varepsilon({\frak g})$. Therefore for every  character $\eta: Z_0 \rightarrow \mathbb{C}$ one can define the corresponding subalgebra in $U_\eta({\frak g})$. We denote this subalgebra by $U_\eta({\frak m}_-)$. By part (ii) of Proposition \ref{uq1z} we have ${\rm dim}~U_\eta({\frak m}_-)=m^{{\rm dim}~\m_-}$

First we study some properties of the finite dimensional algebras $U_\eta({\frak g})$ and $U_\eta({\frak m}_-)$.
In order to define Whittaker vectors for quantum groups at roots of unity we shall need some auxiliary notions that we are going to discuss now.

Below we use the normal ordering of $\Delta_+$ associated to $s^{-1}$.
Observe that by Proposition \ref{pord} for any two roots $\alpha, \beta\in \Delta_{\m_+}$ such that $\alpha<\beta$ the sum $\alpha+\beta$ can not be represented as a linear combination $\sum_{k=1}^qc_k\gamma_{i_k}$, where $c_k\in \mathbb{N}$ and $\alpha<\gamma_{i_1}<\ldots <\gamma_{i_k}<\beta$, and hence from commutation relations (\ref{erel}) one can deduce that
\begin{equation}\label{eqJ}
f_{\alpha}f_{\beta}-\varepsilon^{(\alpha,\beta)-nd(\frac{1+s}{1-s}P_{{\h'}^*}\alpha,\beta)}f_{\beta}f_{\alpha}=\sum_{\alpha<\delta_1<\ldots<\delta_n<\beta}C'(k_1,\ldots,k_n)
f_{\delta_n}^{k_n}f_{\delta_{n-1}}^{k_{n-1}}\ldots f_{\delta_1}^{k_1}\in \mathcal{J},
\end{equation}
where at least one of the roots $\delta_i$ in the right hand side of the last formula belongs to $\Theta=\{\alpha \in \Delta_{\m_+}:\alpha \not\in \{\gamma_1, \ldots ,\gamma_{l'}\}\}$, $\mathcal{J}$ is the ideal in $U_{\eta}({\frak m}_-)$ generated by the elements $f_\beta\in U_{\eta}({\frak m}_-)$, $\beta\in \Theta$. Thus from part (ii) of Proposition \ref{uq1z} and commutation relations (\ref{eqJ}) it follows that if $\delta_1<\delta_2<\ldots<\delta_b$ are the roots in the segment $\Delta_{\m_+}$, the elements
\begin{equation}\label{basJJ}
x_{k_1,\ldots,k_b}=f_{\delta_b}^{k_b}f_{\delta_{b-1}}^{k_{b-1}}\ldots f_{\delta_1}^{k_1}
\end{equation}
for $k_i\in \Bbb{N}$, $k_i<m$ form a linear basis of $U_{\eta}({\frak m}_-)$, and elements (\ref{basJJ}) for $k_i\in \Bbb{N}$, $k_i<m$ and $k_i>0$ for at least one $\delta_i\in \Theta$ form a linear basis of $\mathcal{J}$.

\begin{lemma}\label{Jacob}
Let $\eta$ be an element of ${\rm Spec}(Z_0)$. Assume that $\eta(f_{\gamma_i}^m)=a_i\neq 0$ for $i=1,\ldots ,l'$and that and $\eta(f_\beta^m)=0$ for $\beta\in \Delta_{\m_+}$, $\beta\not\in \{\gamma_1, \ldots ,\gamma_{l'}\}$, and hence $f_{\gamma_i}^m=\eta(f_{\gamma_i}^m)=a_i\neq 0$ in $U_{\eta}({\frak m}_-)$ for $i=1,\ldots ,l'$ and $f_\beta^m=0$ in $U_{\eta}({\frak m}_-)$ for $\beta\in \Delta_{\m_+}$, $\beta\not\in \{\gamma_1, \ldots ,\gamma_{l'}\}$.
Then the ideal $\mathcal{J}$ is the Jacobson radical of $U_{\eta}({\frak m}_-)$ and $U_{\eta}({\frak m}_-)/\mathcal{J}$ is isomorphic to the truncated polynomial algebra $$\mathbb{C}[f_{\gamma_1},\ldots,f_{\gamma_{l'}}]/\{f_{\gamma_i}^m=a_i\}_{ i=1,\ldots ,l'}$$.
\end{lemma}
\begin{proof}
First we show that $\mathcal{J}$ is nilpotent.


Let $i$ be the largest number such that $k_j=0$ for $j>i$ in (\ref{basJJ}) and $k_i\neq 0$. Then we define the degree of $x_{k_1,\ldots,k_b}$ by
$$
deg(x_{k_1,\ldots,k_b})=(k_i,i)\in \{1,\ldots,m-1\}\times \{1,\ldots,b\}.
$$

Equip $\{1,\ldots,m-1\}\times \{1,\ldots,b\}$ with the order such that $(k,i)<(k',j)$ if $j>i$ or $j=i$ and $k'>k$.

For any given $(k,i)\in \{1,\ldots,m-1\}\times \{1,\ldots,b\}$ denote by $(U_{\eta}({\frak m}_-))_{(k,i)}$
 the linear span of the elements $x_{k_1,\ldots,k_b}$ with $deg(x_{k_1,\ldots,k_b})\leq (k,i)$ and define $\mathcal{J}_{(k,i)}=\mathcal{J}\bigcap (U_{\eta}({\frak m}_-))_{(k,i)}$.
We also have $(U_{\eta}({\frak m}_-))_{(k,i)}\subset (U_{\eta}({\frak m}_-))_{(k',j)}$ and
$\mathcal{J}_{(k,i)}\subset \mathcal{J}_{(k',j)}$ if $(k,i)<(k',j)$, and $\mathcal{J}_{(m-1,b)}=\mathcal{J}$. Note that for the first few $i$ linear spaces $\mathcal{J}_{(k,i)}$ may be trivial, and these are all possibilities when those spaces can be trivial.

We shall prove that $\mathcal{J}$ is nilpotent by induction over the order in $\{1,\ldots,m-1\}\times \{1,\ldots,b\}$.
Let $(k,i)$ be minimal possible such that $\mathcal{J}_{(k,i)}$ is not trivial. Then we must have $k=1$. If $y\in \mathcal{J}_{(1,i)}$ then $y$ must be of the form
\begin{equation}\label{yf}
y=f_{\beta}v,
\end{equation}
where $v$ is a linear combination of elements of the form $f_{\delta_{i_1}}^{k_{1}}\ldots f_{\delta_{i_r}}^{k_r}$ for ${\delta_{i_1}},\ldots ,{\delta_{i_r}}\in \{\gamma_{n+1},\ldots , \gamma_{l'}\}$, $\beta>\delta_{i_1}$, and $\beta$ is the first root from the set $\Theta$ greater than $\gamma_{l'}$ in the normal ordering of $\Delta_+$  associated to $s^{-1}$. Here it is assumed that $f_{\delta_{1}}^{k_{1}}\ldots f_{\delta_r}^{k_r}=1$ if the set $\{\gamma_{n+1},\ldots , \gamma_{l'}\}$ is empty.

Now equation (\ref{eqJ}) implies that for any $f_{\delta_{i_j}}$ which appears in the expression for $v$ one has
\begin{equation}\label{rr1}
f_{\beta}f_{\delta_{i_j}}-\varepsilon^{(\beta,\delta_{i_j})-nd(\frac{1+s}{1-s}P_{{\h'}^*}\beta,\delta_{i_j})}f_{\delta_{i_j}}f_{\beta}\in \mathcal{J}_{(m-1,i-1)}=0
\end{equation}
as by our choice of $i$ $\mathcal{J}_{(m-1,i-1)}=0$.

Formula (\ref{rr1}) implies that the product of $m$ elements of type (\ref{yf}) can be represented in the form
$$
f_{\beta}^{m}v',
$$
where $v'$ is of the same form as $v$. Since $f_{\beta}^{m}=0$ we deduce that $\mathcal{J}_{(1,i)}^m=0$.

Now assume that $\mathcal{J}_{(k,i)}^K=0$ for some $K>0$. Let $(k',i')$ be the smallest element of $\{1,\ldots,m-1\}\times \{1,\ldots,b\}$ which satisfies $(k,i)<(k',i')$. Then by Propositions \ref{rootss} and \ref{uq1z}, and by (\ref{eqJ}), any element of  $\mathcal{J}_{(k',i')}$ is of the form $f_{\delta_{i'}}u+u'$, where $u'\in \mathcal{J}_{(k,i)}$ and if $\delta_{i'}\in \Theta$ then $u\in (U_{\eta}({\frak m}_-))_{(k,i)}$; if $\delta_{i'}\not\in \Theta$ then $u\in \mathcal{J}_{(k,i)}$.

Now equation (\ref{eqJ}) together with (\ref{erel}) imply that for any $u\in (U_{\eta}({\frak m}_-))_{(k,i)}$ one has
\begin{equation}\label{ufc}
uf_{\delta_{i'}}=cf_{\delta_{i'}}u+w,
\end{equation}
where $c$ is a non--zero constant depending on $u$, and $w\in \mathcal{J}_{(k,i)}$.
By formula (\ref{ufc}) the product of $m$ elements $f_{\delta_{i'}}u_p+u'_p$, $p=1,\ldots,m$ of the type described above can be represented in the form
\begin{equation}\label{sj}
\sum_{j=0}^mf_{\delta_{i'}}^jc_j,
\end{equation}
where $c_j\in \mathcal{J}_{(k,i)}$ for $j=0,\ldots ,m-1$ and if $\delta_{i'}\in \Theta$ then $c_m\in (U_{\eta}({\frak m}_-))_{(k,i)}$; if $\delta_{i'}\not\in \Theta$ then $c_m\in \mathcal{J}_{(k,i)}$. In the former case $f_{\delta_{i'}}^m=0$, and the last term in sum (\ref{sj}) is zero; in the latter case $f_{\delta_{i'}}^m=\eta(f_{\delta_{i'}}^m)\neq 0$, and the last term in sum (\ref{sj}) is from $\mathcal{J}_{(k,i)}$. So we can combine it with the term corresponding to $j=0$. In both cases sum (\ref{sj}) takes the form
\begin{equation}\label{st}
\sum_{j=0}^{m-1}f_{\delta_{i'}}^jc_j',
\end{equation}
where $c_j'\in \mathcal{J}_{(k,i)}$. By (\ref{ufc}) the product of $K$ sums of type (\ref{st}) is of the form
$$
\sum_{j=0}^{(m-1)K}f_{\delta_{i'}}^jc_j'',
$$
where each $c_j''$ is a linear combination of elements from $\mathcal{J}_{(k,i)}^K$.
By our assumption $\mathcal{J}_{(k,i)}^K=0$, and hence the product of any $mK$ elements of $\mathcal{J}_{(k',i')}$ is zero. This justifies the induction step and proves that $\mathcal{J}_{(m-1,b)}=\mathcal{J}$ is nilpotent.
Hence $\mathcal{J}$ is contained in the Jacobson radical of $U_{\eta}({\frak m}_-)$.

Using commutation relations (\ref{erel}) we also have (see the proof of Theorem \ref{qnil})
$$
f_{\gamma_i}f_{\gamma_j} - f_{\gamma_j}f_{\gamma_i}\in \mathcal{J}.
$$
Therefore the quotient algebra $U_{\eta}({\frak m}_-)/\mathcal{J}$ is isomorphic to the truncated polynomial algebra $$\mathbb{C}[f_{\gamma_1},\ldots,f_{\gamma_{l'}}]/\{f_{\gamma_i}^m=a_i\}_{ i=1,\ldots ,l'}$$ which is semisimple. Therefore $\mathcal{J}$ coincides with the Jacobson radical of $U_{\eta}({\frak m}_-)$.
\end{proof}



In Theorem \ref{qnil} we constructed some characters of the algebra $U_\varepsilon^{s}({\frak m}_-)$. Similarly one can define characters of the algebra $U_\varepsilon^{s^{-1}}({\frak m}_-)$. Now we show that the algebra $U_\eta({\frak m}_-)$ has a finite number of irreducible representations which are one--dimensional, and all those representations can be obtained from each other by twisting with the help of automorphisms of $U_\eta({\frak m}_-)$.

\begin{proposition}\label{irrepod}
Let $\eta$ be an element of ${\rm Spec}(Z_0)$. Assume that $\eta(f_{\gamma_i}^m)=a_i\neq 0$ for $i=1,\ldots ,l'$and that and $\eta(f_\beta^m)=0$ for $\beta\in \Delta_{\m_+}$, $\beta\not\in \{\gamma_1, \ldots ,\gamma_{l'}\}$, and hence $f_{\gamma_i}^m=\eta(f_{\gamma_i}^m)=a_i\neq 0$ in $U_{\eta}({\frak m}_-)$ for $i=1,\ldots ,l'$ and $f_\beta^m=0$ in $U_{\eta}({\frak m}_-)$ for $\beta\in \Delta_{\m_+}$, $\beta\not\in \{\gamma_1, \ldots ,\gamma_{l'}\}$. Then all non--zero irreducible representations of the algebra $U_\eta({\frak m}_-)$ are one--dimensional and have the form
\begin{equation}\label{chichar}
\chi(f_\beta)=\left\{ \begin{array}{ll} 0 & \beta \not \in \{\gamma_1, \ldots, \gamma_{l'}\} \\ c_i & \beta=\gamma_i,~i=1,\ldots ,l'
\end{array}
\right  .,
\end{equation}
where complex numbers $c_i$ satisfy the conditions $c_i^m=a_i$, $i=1,\ldots ,l'$.
Moreover, all non--zero irreducible representations of $U_\eta({\frak m}_-)$ can be obtained from each other by twisting with the help of automorphisms of $U_\eta({\frak m}_-)$.
\end{proposition}

\begin{proof}
Let $V$ be a non--zero finite--dimensional irreducible $U_{\eta}({\frak m}_-)$--module.
By Corollary 54.13 in \cite{CR} elements of the ideal $\mathcal{J}\subset U_{\eta}({\frak m}_-)$ act by zero transformations on $V$. Hence $V$ is in fact an irreducible representation of the algebra $U_{\eta}({\frak m}_-)/\mathcal{J}$ which is isomorphic to the truncated polynomial algebra $$\mathbb{C}[f_{\gamma_1},\ldots,f_{\gamma_{l'}}]/\{f_{\gamma_i}^m=a_i\}_{ i=1,\ldots ,l'}.$$ The last algebra is commutative and all its complex irreducible representations are
one--dimensional. Therefore $V$ is one--dimensional, and if $v$ is a nonzero element of $V$ then $f_{\gamma_i}v=c_iv$, for some $c_i\in \Bbb{C}$, $i=1,\ldots ,l'$.
Note that $\eta(f_{\gamma_i}^m)=a_i\neq 0$, $i=1,\ldots ,l'$ and hence $c_i^m=a_i\neq 0$, $i=1,\ldots ,l'$. In particular, the elements $f_{\gamma_i}$ act on $V$ by semisimple automorphisms.

If we denote by $\chi:U_{\eta}({\frak m}_-) \rightarrow \mathbb{C}$ the character of $U_{\eta}({\frak m}_-)$ such that
$$
\chi(f_\beta)=\left\{ \begin{array}{ll} 0 & \beta \not \in \{\gamma_1, \ldots, \gamma_{l'}\} \\ c_i & \beta=\gamma_i, ~i=1,\ldots ,l'
\end{array}
\right  .
$$
and by $\mathbb{C}_{\chi}$ the corresponding one--dimensional representation of $U_{\eta}({\frak m}_-)$ then we have $V=\mathbb{C}_{\chi}$.

Now we have to prove that the representations $\mathbb{C}_{\chi}$ for different characters $\chi$ are obtained from each other by twisting with the help of automorphisms of $U_{\eta}({\frak m}_-)$.

Since $c_i^m=a_i$, $i=1,\ldots ,l'$ there are only finitely many possible characters $\chi$ corresponding to the given $\eta$ in the statement of this proposition.
If $\chi$ and $\chi'$ are two such characters, $\chi(f_{\gamma_i})=c_i$, $i=1,\ldots ,l'$ and $\chi'(f_{\gamma_i})=c_i'$, $i=1,\ldots ,l'$ then the relations $c_i^m={c_i'}^m=a_i$, $i=1,\ldots ,l'$ imply that
$c_i'=\varepsilon^{m_i}c_i$, $0\leq m_i \leq m-1$, $m_i\in \mathbb{Z}$, $i=1,\ldots ,l'$.

Now observe that for any $h\in \h$ the map defined by $f_\alpha \mapsto \varepsilon^{\alpha(h)}f_\alpha$, $\alpha \in \Delta_{\m_+}$ is an automorphism of the algebra $U_\varepsilon^{s^{-1}}({\frak m}_-)$ generated by elements $f_\alpha$, $\alpha \in \Delta_{\m_+}$ with defining relations (\ref{erel}). Here the principal branch of the analytic function $\varepsilon^z$ is used to define $\varepsilon^{\alpha(h)}$, so that $\varepsilon^{\alpha(h)}\varepsilon^{\beta(h)}=\varepsilon^{(\alpha+\beta)(h)}$ for any $\alpha,\beta\in \Delta_{\m_+}$. If in addition $\varepsilon^{m\gamma_i(h)}=1$, $i=1,\ldots ,l'$ the above defined map gives rise to an automorphism $\varsigma$ of $U_{\eta}({\frak m}_-)$. Indeed in that case $(\varepsilon^{\gamma_i(h)}f_{\gamma_i})^m=f_{\gamma_i}^m$, $i=1,\ldots ,l'$ and all the remaining defining relations $f_{\gamma_i}^m=\eta(f_{\gamma_i}^m)=a_i\neq 0$, $i=1,\ldots ,l'$, $f_{\beta}^m=\eta(f_{\beta}^m)=0$, $\beta \in \Delta_{\m_+}$, $\beta \not \in \{\gamma_1, \ldots, \gamma_{l'}\}$ of the algebra $U_{\eta}({\frak m}_-)$ are preserved by the action of the above defined map $\varsigma$.

Now fix $h\in \h$ such that $\gamma_i(h)=m_i$, $i=1,\ldots ,l'$. Obviously we have $\varepsilon^{mm_i}=1$, $i=1,\ldots ,l'$. We claim that the representation $\mathbb{C}_{\chi}$ twisted by the corresponding automorphism  $\varsigma$ coincides with $\mathbb{C}_{\chi'}$. Indeed, we obtain
$$
\chi(\varsigma f_{\gamma_i})=\chi(\varepsilon^{m_i}f_{\gamma_i})=\varepsilon^{m_i}c_i=c_i',~i=1,\ldots ,l'.
$$
This completes the proof of the proposition.
\end{proof}

Let $V$ be a $U_{\eta}({\frak g})$--module, where $\eta$ is an element of ${\rm Spec}(Z_0)$ such that $\eta(f_{\gamma_i}^m)=a_i\neq 0$ for $i=1,\ldots ,l'$and that and $\eta(f_\beta^m)=0$ for $\beta\in \Delta_{\m_+}$, $\beta\not\in \{\gamma_1, \ldots ,\gamma_{l'}\}$. Let $\chi:U_\eta({\frak m}_-)\rightarrow \mathbb{C}$ be a character defined in the  Proposition \ref{irrepod}, $\mathbb{C}_{\chi}$ the corresponding one--dimensional $U_\eta({\frak m}_-)$--module. Then the space $V_\chi={\rm Hom}_{U_\eta({\frak m}_-)}(\mathbb{C}_{\chi},V)$ is called the space of Whittaker vectors of $V$. Elements of $V_\chi$ are called Whittaker vectors.

Now we describe the space of Whittaker vectors in terms of a nilpotent action of the unital subalgebra $U_{\eta_0}({\frak m}_-)$ generated by $f_\alpha$, $\alpha\in \Delta_{\m_+}$ in the small quantum group $U_{\eta_0}({\frak g})=U_\varepsilon^{s^{-1}}(\g)/I_{\eta_0}$ corresponding to the trivial central character $\eta_0$ such that $\widetilde{\pi}(\eta_0)=1\in G_0^*$  and $\eta_0(x_\alpha^\pm)=0$, $\alpha\in \Delta_+$, $\eta_0(l_i)=1$, $i=1,\ldots , l$.
Recall that the algebra $U_\varepsilon^{s^{-1}}(\g)$ is a Hopf algebra. 
We shall need the following formula for the action of the comultiplication on the quantum root elements $f_\beta$, $\beta=\sum_{i=1}^lc_i\alpha_i\in \Delta_{\m_+}$, $c_i\in \mathbb{N}$,
\begin{eqnarray}\label{cm1}
\Delta_{s^{-1}}(f_{\beta})=\prod_{i=1}^lK_i^{c_i}\prod_{i,j=1}^lL_j^{\frac{nd}{d_j}(\frac{1+s}{1-s}P_{{\h'}^*}\alpha_i,\alpha_j)c_i}\otimes f_\beta+f_\beta \otimes 1+ \\
+\sum_i y_i\otimes x_i,~x_i\in U_{<\beta}, y_i\in U_{>\beta}U_\varepsilon^{s^{-1}}(\h), \nonumber
\end{eqnarray}
where $U_{<\beta}$ is the subalgebra (without unit) in $U_\varepsilon^{s^{-1}}({\frak m}_-)$ generated by ${f}_{\alpha}$, $\alpha<\beta$ and $U_{>\beta}$ is the subalgebra (without unit) in $U_\varepsilon^{s^{-1}}(\n_-)$ generated by ${f}_{\alpha}$, $\alpha>\beta$.

To derive formula (\ref{cm1}) we first observe that by Corollary 4.3.2 in \cite{Dr} one has in $U_h^{s^{-1}}(\g)$
\begin{eqnarray}\label{ddq}
\Delta(X_{\beta}^-)=\prod_{i=1}^lK_i^{c_i}\otimes X_\beta^-+X_\beta^- \otimes 1+ \\
+\sum_i y_i'\otimes x_i', \nonumber
\end{eqnarray}
where $x_i'$ are elements of the subalgebra (without unit) in $U_h(\g)$ generated by ${X}_{\alpha}^-$, $\alpha<\beta$ and $y_i'$ are elements of the subalgebra (without unit) in $U_h(\g)$ generated by ${X}_{\alpha}^-$, $\alpha>\beta$ and by the $L_j^{\pm 1}, j=1,\ldots ,l$.

Using formula (\ref{defds}), the definition of the elements $f_\beta$ and the action of the comultiplication $\Delta$ on the elements $H_i$ we obtain from formula (\ref{ddq}) that
\begin{eqnarray*}
\Delta_{s^{-1}}(f_{\beta})=\prod_{i=1}^lK_i^{c_i}\prod_{i,j=1}^lL_j^{\frac{nd}{d_j}(\frac{1+s}{1-s}P_{{\h'}^*}\alpha_i,\alpha_j)c_i}\otimes f_\beta+f_\beta \otimes 1+ \\
+\sum_i y_i\otimes x_i,
\end{eqnarray*}
where $x_i$ are elements of the subalgebra (without unit) in $U_h^{s^{-1}}(\g)$ generated by $f_{\alpha}$, $\alpha<\beta$ and by the $L_j^{\pm 1}, j=1,\ldots ,l$ and $y_i$ are elements of the subalgebra (without unit) in $U_h^{s^{-1}}(\g)$ generated by $f_{\alpha}$, $\alpha>\beta$ and by the $L_j^{\pm 1}, j=1,\ldots ,l$.

On the other hand formula (\ref{comults}) and commutation relations (\ref{erel}) imply that in fact $x_i$ are elements of the subalgebra (without unit) in $U_h^{s^{-1}}(\g)$ generated by $f_{\alpha}$, $\alpha<\beta$. Since $U_{\mathcal{A}}^{s^{-1}}(\g)$ is in fact a Hopf subalgebra in $U_h^{s^{-1}}(\g)$ we deduce that $x_i$ are elements of the subalgebra (without unit) in $U_{\mathcal{A}}^{s^{-1}}(\g)$ generated by $f_{\alpha}$, $\alpha<\beta$ and $y_i$ are elements of the subalgebra (without unit) in $U_{\mathcal{A}}^{s^{-1}}(\g)$ generated by $f_{\alpha}$, $\alpha>\beta$ and by the $L_j^{\pm 1}, j=1,\ldots ,l$. This implies (\ref{cm1}).

Formula (\ref{cm1}) shows that $U_\varepsilon^{s^{-1}}({\frak m}_-)$ is a right coideal in $U_\varepsilon^{s^{-1}}(\g)$. One can also equip the algebra $U_\varepsilon^{s^{-1}}({\frak m}_-)$ with a character given by formula (\ref{char}), where the numbers $c_i$ are the same as in the definition of the character $\chi$. We denote this character by the same letter, $\chi: U_\varepsilon^{s^{-1}}({\frak m}_-)\rightarrow \mathbb{C}$.

Note that $V$ can be regarded as a $U_\varepsilon(\g)$--module and a $U_\varepsilon^{s^{-1}}(\g)$--module assuming that the ideal $I_\eta$ acts on $V$ in the trivial way.
Now observe that $\Delta_{s^{-1}}:U_\varepsilon^{s^{-1}}({\frak m}_-)\rightarrow U_\varepsilon^{s^{-1}}(\g)\otimes U_\varepsilon^{s^{-1}}({\frak m}_-)$ is a homomorphism of algebras. Composing it with the tensor product $S_{s^{-1}}\otimes \chi$ of the anti--homomorphism $S_{s^{-1}}$ and of the character $\chi$, which can be regarded as an anti--homomorphism as well, one can define an anti--homomorphism, $U_\varepsilon^{s^{-1}}({\frak m}_-)\rightarrow U_\varepsilon^{s^{-1}}(\g)$, $x\mapsto S_{s^{-1}}(x_1)\chi(x_2)$, $\Delta_{s^{-1}}(x)=x_1\otimes x_2$, $x\in U_\varepsilon^{s^{-1}}({\frak m}_-)$.

Using this anti--homomorphism one can introduce a right $U_\varepsilon^{s^{-1}}({\frak m}_-)$--action on $V$ which we call the adjoint action and denote it by $\rm Ad$. It is given by the formula (compare with the definition of the adjoint action in (\ref{Adact}))
\begin{equation}\label{AD}
{\rm Ad}~x v=S_{s^{-1}}(x_1)\chi(x_2)v, x\in U_\varepsilon^{s^{-1}}({\frak m}_-), v\in V,
\end{equation}
where $\Delta_{s^{-1}}(x)=x_1\otimes x_2$. 

Note that using the Swedler notation for the comultiplication, $(\Delta_{s^{-1}}\otimes id \otimes id) (\Delta_{s^{-1}}\otimes id) \Delta_{s^{-1}}(x)=x_1\otimes x_2\otimes x_3 \otimes x_4$, the coassiciativity of the comultiplication and the definition of the antipode we have for any $x \in U_\varepsilon^{s^{-1}}({\frak m}_-)$, $y\in U_\varepsilon^{s^{-1}}(\g)$, $v\in V$ (compare with the proof of Lemma 2.2 in \cite{JL})
\begin{equation}\label{adm}
{\rm Ad}x(yv)=S_{s^{-1}}(x_1)\chi(x_2)yv=S_{s^{-1}}(x_1)yx_2S_{s^{-1}}(x_3)\chi(x_4)v={\rm Ad}x_1(y){\rm Ad}x_2(v).
\end{equation}

Similarly to the Proposition in Section 5.6 in \cite{DKP1} we infer that $Z_0$ is a Hopf subalgebra in $U_\varepsilon^{s^{-1}}(\g)$. Namely,
\begin{eqnarray*}
\Delta_{s^{-1}}(f_i^m)=K_i^{m}\prod_{j=1}^lL_j^{m\frac{nd}{d_j}(\frac{1+s}{1-s}P_{{\h'}^*}\alpha_i,\alpha_j)}\otimes f_i^m+f_i^m \otimes 1,  \\
\Delta_{s^{-1}}(e_i^m)=e_i^m\otimes K_i^{-m}+\prod_{j=1}^lL_j^{-m\frac{nd}{d_j}(\frac{1+s}{1-s}P_{{\h'}^*}\alpha_i,\alpha_j)}\otimes e_i^m, \\
\Delta_{s^{-1}}(L_i^m)=L_i^m\otimes L_i^m.
\end{eqnarray*}
Therefore recalling that by the definition of $\chi$
for $x\in U_\varepsilon^{s^{-1}}({\frak m}_-)\bigcap Z_0$ one has $\chi(x)=\eta(x)$ we deduce
$$
{\rm Ad}~x v=S_{s^{-1}}(x_1)\chi(x_2)v=\eta(S_{s^{-1}}(x_1)x_2)v=\varepsilon_{s^{-1}}(x)v, v\in V,
$$
where $\varepsilon_{s^{-1}}$ is the counit of $U_\varepsilon^{s^{-1}}(\g)$. 
Note that by the definition of the ideal $I_{\eta_0}$ the ideal $U_\varepsilon^{s^{-1}}({\frak m}_-)\bigcap I_{\eta_0}\subset U_\varepsilon^{s^{-1}}({\frak m}_-)$  is generated by the elements $f_\alpha^m$, $\alpha\in \Delta_{\m_+}$ and  $\varepsilon_{s^{-1}}(f_\alpha^m)=0$ for $\alpha\in \Delta_{\m_+}$ by the definition of $\varepsilon_{s^{-1}}$.
Hence the adjoint action of $U_\varepsilon^{s^{-1}}({\frak m}_-)$ on $V$ induces an action of the subalgebra $U_{\eta_0}({\frak m}_-)$ of the small quantum group $U_{\eta_0}({\frak g})$. We call this action the adjoint action as well.

Note that the small quantum group $U_{\eta_0}({\frak g})$ is a Hopf algebra with the comultiplication inherited from $U_\varepsilon^{s^{-1}}({\frak g})$.
Now arguments used in the proof of Proposition 5.6 in \cite{S11} can be applied verbatim to establish the following lemma.

\begin{lemma}
The space of Whittaker vectors $V_\chi$ coincides with the space of $U_{\eta_0}({\frak m}_-)$--invariants for the adjoint action on $V$,
\begin{equation}\label{ADW}
V_\chi=\{ v\in V: {\rm Ad}~x(v)=\varepsilon_{s^{-1}}(x)v ~~~\forall x\in U_{\eta_0}({\frak m}_-)\}.
\end{equation}
\end{lemma}

\begin{proof}
Indeed, denote by $T_\beta$ the factor $\prod_{i=1}^lK_i^{c_i}\prod_{i,j=1}^lL_j^{\frac{nd}{d_j}(\frac{1+s}{1-s}P_{{\h'}^*}\alpha_i,\alpha_j)c_i}$ which appears in (\ref{cm1}), $T_\beta=\prod_{i=1}^lK_i^{c_i}\prod_{i,j=1}^lL_j^{\frac{nd}{d_j}(\frac{1+s}{1-s}P_{{\h'}^*}\alpha_i,\alpha_j)c_i}$. Then by the definition of the antipode we have from (\ref{cm1}) 
$$
S_{s^{-1}}(T_\beta)f_\beta+S_{s^{-1}}(f_\beta)
+\sum_i S_{s^{-1}}(y_i)x_i=\varepsilon_{s^{-1}}(f_\beta)=0.
$$
Since $S_{s^{-1}}(T_\beta)=T_\beta^{-1}=\prod_{i=1}^lK_i^{-c_i}\prod_{i,j=1}^lL_j^{-\frac{nd}{d_j}(\frac{1+s}{1-s}P_{{\h'}^*}\alpha_i,\alpha_j)c_i}$ this yields
\begin{equation}\label{cm2}
S_{s^{-1}}(f_\beta)=-S_{s^{-1}}(T_\beta)f_\beta
-\sum_i S_{s^{-1}}(y_i)x_i.
\end{equation}

Now for $\beta\in \Delta_{\m_+}$, (\ref{cm1}), (\ref{cm2}) and definition (\ref{AD}) of the adjoint action imply
\begin{eqnarray}\label{cm3}
{\rm Ad}~f_{\beta}v= T_\beta^{-1}\chi(f_\beta)v-T_\beta^{-1}f_\beta v - 
\sum_i S_{s^{-1}}(y_i) x_i v +\sum_i S_{s^{-1}}(y_i) \chi(x_i)v= \nonumber \\
=T_\beta^{-1}(\chi(f_\beta)-f_\beta) v + 
\sum_i S_{s^{-1}}(y_i) (\chi(x_i)-x_i)v, ~x_i\in U_{<\beta}, y_i\in U_{>\beta}U_\varepsilon^{s^{-1}}(\h).
\end{eqnarray}

If $v\in V_\chi$ we immediately obtain from (\ref{cm3}) that ${\rm Ad}~f_{\beta}v=0$ for any $\beta\in \Delta_{\m_+}$, i.e. $v$ belongs to the right hand side of (\ref{ADW}).

Conversely, suppose that $v$ belongs to the right hand side of (\ref{ADW}). We shall show that $xv=\chi(x)v$ for any $x\in U_\varepsilon^{s^{-1}}({\frak m}_-)$. Let $\overline{U}_{<\beta}$ be the subalgebra with unit generated by $U_{<\beta}$.
We proceed by induction over the subalgebras $\overline{U}_{<\delta_k}$, $k=1,\ldots b+1$, where as before $\delta_1<\ldots <\delta_b$ is the normally ordered segment $\Delta_{\m_+}$ and we define $\overline{U}_{<\delta_{b+1}}$ to be the subalgebra $U_\varepsilon^{s^{-1}}({\frak m}_-)$. 

Observe that $\delta_1$  is a simple root and hence $U_{<\delta_1}=0$. Therefore we deduce from (\ref{cm3}) for $\beta=\delta_1$
$$
{\rm Ad}~f_{\delta_1}v= 
T_{\delta_1}^{-1}(\chi(f_{\delta_1})-f_{\delta_1}) v=0.
$$
Since $T_{\delta_1}^{-1}$ acts on $V$ by an invertible transformation this implies $(\chi(f_{\delta_1})-f_{\delta_1}) v=0$, and hence $xv=\chi(x)v$ for any $x\in \overline{U}_{<\delta_2}$ as $\overline{U}_{<\delta_2}$ is generated by $f_{\delta_1}$.

Now assume that for some $k\leq b$ $xv=\chi(x)v$ for any $x\in \overline{U}_{<\delta_k}$. Then by (\ref{cm3})
$$
{\rm Ad}~f_{\delta_k}v= 
T_{\delta_k}^{-1}(\chi(f_{\delta_k})-f_{\delta_k}) v=0.
$$
As above this implies 
\begin{equation}\label{cm4}
(\chi(f_{\delta_k})-f_{\delta_k}) v=0.
\end{equation}

By Proposition \ref{rootss} any element $y\in \overline{U}_{<\delta_{k+1}}$ can be uniquely represented in the form $y=f_{\delta_k}y'+y''$, where $y',y''\in \overline{U}_{<\delta_k}$. Now by (\ref{cm4}) and by the induction assumption
$$
yv=(f_{\delta_k}y'+y'')v=\chi(f_{\delta_k})\chi(y')v+\chi(y'')v=\chi(y)v,
$$
i.e. $yv=\chi(y)v$ for any $x\in \overline{U}_{<\delta_{k+1}}$. This establishes the induction step and completes the proof.

\end{proof}

The following proposition is an analogue of Engel theorem for quantum groups at roots of unity.

\begin{proposition}\label{Whitt}
Let $\eta$ be an element of ${\rm Spec}(Z_0)$. Assume that $\eta(f_{\gamma_i}^m)=a_i\neq 0$ for $i=1,\ldots ,l'$ and that  $\eta(f_\beta^m)=0$ for $\beta\in \Delta_{\m_+}$, $\beta\not\in \{\gamma_1, \ldots ,\gamma_{l'}\}$, and hence $f_{\gamma_i}^m=\eta(f_{\gamma_i}^m)=a_i\neq 0$ in $U_{\eta}({\frak m}_-)$ for $i=1,\ldots ,l'$ and $f_\beta^m=0$ in $U_{\eta}({\frak m}_-)$ for $\beta\in \Delta_{\m_+}$, $\beta\not\in \{\gamma_1, \ldots ,\gamma_{l'}\}$. Let $\chi:U_\eta({\frak m}_-)\rightarrow \mathbb{C}$ be any character defined in Proposition \ref{irrepod}.
Then any non--zero finite--dimensional $U_{\eta}({\frak g})$--module contains a non--zero Whittaker vector.
\end{proposition}

\begin{proof}
First we show that the augmentation ideal $\mathcal{J}^0$ of $U_{\eta_0}({\frak m}_-)$ coincides with its Jacobson radical  which is nilpotent. The proof of this fact is similar to that of Lemma \ref{Jacob}, and we shall keep the notation used in that proof. 

We define $\mathcal{J}_{(k,i)}^0=\mathcal{J}^0\bigcap (U_{\eta_0}({\frak m}_-))_{(k,i)}$, so that
$\mathcal{J}_{(k,i)}^0\subset \mathcal{J}_{(k',j)}^0$ if $(k,i)<(k',j)$, and $\mathcal{J}_{(m-1,b)}^0=\mathcal{J}^0$. 

We shall prove that $\mathcal{J}^0$ is nilpotent by induction over the order in $\{1,\ldots,m-1\}\times \{1,\ldots,b\}$.
Note that $(k,i)=(1,1)$ is minimal possible such that $\mathcal{J}_{(k,i)}$ is not trivial. If $y\in \mathcal{J}_{(1,1)}$ then $y$ must be of the form
\begin{equation}\label{yf1}
y=af_{l'}, a\in \mathbb{C}.
\end{equation}

The product of $m$ elements of type (\ref{yf1}) is equal to zero,
$$
f_{\beta}^{m}a^m=0,
$$
as $f_{\beta}^{m}=0$. We deduce that $(\mathcal{J}_{(1,1)}^0)^m=0$.

Now assume that $(\mathcal{J}_{(k,i)}^0)^K=0$ for some $K>0$. Let $(k',i')$ be the smallest element of $\{1,\ldots,m-1\}\times \{1,\ldots,b\}$ which satisfies $(k,i)<(k',i')$. Then by Propositions \ref{rootss} and \ref{uq1z} any element of  $\mathcal{J}_{(k',i')}^0$ is of the form $f_{\delta_{i'}}u+u'$, where $u'\in \mathcal{J}_{(k,i)}^0$ and $u\in (U_{\eta_0}({\frak m}_-))_{(k,i)}$.

Now equation (\ref{erel}) implies that for any $u\in (U_{\eta_0}({\frak m}_-))_{(k,i)}$ one has
\begin{equation}\label{ufc1}
uf_{\delta_{i'}}=cf_{\delta_{i'}}u+w,
\end{equation}
where $c$ is a non--zero constant depending on $u$, and $w\in \mathcal{J}_{(k,i)}^0$.
By the formula (\ref{ufc1}) the product of $m$ elements $f_{\delta_{i'}}u_p+u'_p$, $p=1,\ldots,m$ of the type described above can be represented in the form
\begin{equation}\label{sj1}
\sum_{j=0}^mf_{\delta_{i'}}^jc_j,
\end{equation}
where $c_j\in \mathcal{J}_{(k,i)}^0$ for $j=0,\ldots ,m-1$ and $c_m\in (U_{\eta_0}({\frak m}_-))_{(k,i)}$. Since $f_{\delta_{i'}}^m=0$ the last term in sum (\ref{sj1}) is zero. So sum (\ref{sj1}) takes the form
\begin{equation}\label{st1}
\sum_{j=0}^{m-1}f_{\delta_{i'}}^jc_j',
\end{equation}
where $c_j'\in \mathcal{J}_{(k,i)}^0$. By (\ref{ufc1}) the product of $K$ sums of type (\ref{st1}) is of the form
$$
\sum_{j=0}^{(m-1)K}f_{\delta_{i'}}^jc_j'',
$$
where each $c_j''$ is a linear combination of elements from $(\mathcal{J}_{(k,i)}^0)^K$.
By our assumption $(\mathcal{J}_{(k,i)}^0)^K=0$, and hence the product of any $mK$ elements of $\mathcal{J}_{(k',i')}^0$ is zero. This justifies the induction step and proves that $\mathcal{J}_{(m-1,b)}^0=\mathcal{J}^0$ is nilpotent.
Hence $\mathcal{J}^0$ is contained in the Jacobson radical of $U_{\eta_0}({\frak m}_-)$.

The quotient algebra $U_{\eta_0}({\frak m}_-)/\mathcal{J}^0$ is isomorphic to $\mathbb{C}$. Therefore $\mathcal{J}^0$ coincides with the Jacobson radical of $U_{\eta_0}({\frak m}_-)$.

Now let $V$ be a finite--dimensional $U_{\eta}({\frak g})$--module. Then $V$ is also a finite--dimensional  $U_{\eta_0}({\frak m}_-)$--module with respect to the adjoint action. Thus $V$ must contain a non--trivial irreducible $U_{\eta_0}({\frak m}_-)$--submodule with respect to the adjoint action on which the Jacobson radical $\mathcal{J}^0$ must act trivially. From (\ref{ADW}) it follows that this non--trivial irreducible submodule consists of Whittaker vectors.This completes the proof.

\end{proof}

Let
$$
G=\bigcup_{\mathcal{C}\in C(W)}G_\mathcal{C}.
$$
be the Lusztig partition of $G$ (see \cite{L1}; we use the notation of \cite{S10}, Section 4). Here $C(W)\subset \underline{W}$ is a certain subset of the set of conjugacy classes $\underline{W}$ in $W$.

From the above discussion and Proposition 6.2 in \cite{S10} we deduce the following theorem.
\begin{theorem}\label{DKPconj}
Let $\eta \in {\rm Spec}(Z_0)$ be an element  such that $\pi\eta\in G_\mathcal{C}$, $\mathcal{C}\in C(W)$ and $s\in \mathcal{C}$.  Denote by $d$ the number corresponding to $s^{-1}$ and defined in Proposition 6.2 in \cite{S10}. Assume that $m$ and $d$ are coprime.

Then there is a system of positive roots
$\Delta_+^{s^{-1}}$ and a quantum coadjoint transformation $\widetilde{g}$ such that $\xi =\widetilde{g}\eta$ satisfies $\xi(f_{\gamma_i}^m)=a_i\neq 0$ for $i=1,\ldots ,l'$ and $\xi(f_\beta^m)=0$  for $\beta\in \Delta_{\m_+}$, $\beta\not\in \{\gamma_1, \ldots ,\gamma_{l'}\}$, where $f_\alpha \in U_{\xi}({\frak m}_-)$ are generators of the corresponding algebra $U_{\xi}({\frak m}_-)\subset U_{\xi}(\g)$ defined in Section \ref{WHITT}. Let $\chi:U_{\xi}({\frak m}_-)\rightarrow \mathbb{C}$ be any character defined in Proposition \ref{irrepod}.
Then any finite--dimensional $U_{\eta}({\frak g})$--module contains a non--zero Whittaker vector with respect to the subalgebra $U_{\eta}^{\widetilde{g}}({\frak m}_-)=\tilde{g}^{-1}U_{\xi}({\frak m}_-)$ and the character $\chi^{\widetilde{g}}$ given by the composition of $\chi$ and $\widetilde{g}$, $\chi^{\widetilde{g}}=\chi \circ \widetilde{g}:U_{\eta}^{\widetilde{g}}({\frak m}_-) \rightarrow \mathbb{C}$.

Moreover, ${\rm dim}~U_{\eta}^{\widetilde{g}}({\frak m}_-)=m^{\frac{1}{2}{\rm dim}~\mathcal{O}_{\pi\eta}}$, where $\mathcal{O}_{\pi\eta}$ is the conjugacy class of $\pi\eta \in G_\mathcal{C}$.
\end{theorem}

\begin{proof}
Let
\begin{equation*}
{\Delta}_0=\{\alpha\in \Delta|s(\alpha)=\alpha\},
\end{equation*}
and $\Gamma$ the set of simple roots in a system of positive roots $\Delta_+^{s^{-1}}$ defined in Proposition \ref{pord}. We shall need the parabolic subalgebra $\p$ of $\g$ and the parabolic subgroup $P$ associated to the subset $\Gamma_0=\Gamma\bigcap {\Delta}_{0}$ of simple roots and containing the Borel subalgebra corresponding to $\Delta_+^{s^{-1}}$. Let $\n$ and $\l$ be the nilradical and the Levi factor of $\p$, $N$ and $L$ the unipotent radical and the Levi factor of $P$, respectively.

Note that we have natural inclusions of Lie algebras $\p\supset\n$. We also denote by $\opn$ the nilpotent subalgebra opposite to $\n$.
Denote by $N$ the subgroup of $G$ corresponding to the Lie subalgebra $\n$ and by $\overline{N}$ the opposite unipotent subgroup in $G$ with the Lie algebra $\overline{\n}$.
Let $Z$ be the subgroup of $G$ generated by the semi--simple part of the Levi factor $L$ corresponding to the Lie subalgebra $\l$ and by the centralizer of $s$ in $H$. Denote by $\dot{s}$ a representative of $s$ in $G$. Let $N_s=\{ v \in N|\dot{s}v\dot{s}^{-1}\in \overline{N} \}$ and $H_0\subset H$ the subgroup corresponding to the orthogonal complement $\h_0$ of $\h'$ in $\h$ with respect to the Killing form.

By Theorem 5.2 in \cite{S10} for every $\mathcal{C}\in C(W)$ and $s\in \mathcal{C}$ there is a system of positive roots
$\Delta_+^{s^{-1}}$ as in Proposition \ref{pord} and such that all conjugacy classes in the stratum $G_\mathcal{C}$ intersect the variety $\dot{s}H_0N_s$ which is a subvariety of the transversal slice $\Sigma_s=\dot{s}ZN_s$ to the set of conjugacy classes in $G$. Note that $N_s$ is not a subgroup in $N_+$. But every element of $n_s\in N_s$ can be uniquely  factorized as follows $n_s=n_s^+n_s^-$, $n_s^\pm\in N_s\cap N_\pm$, $N_s\cap N_-=N\cap N_-$, $N_s\cap N_+\subset M_+$, where $M_+\subset N_+$ is the subgroup corresponding to the Lie subalgebra $\m_+$. Therefore every element $\dot{s}h_0n_s\in \dot{s}H_0N_s$ can be represented as follows $\dot{s}h_0n_s=\dot{s}h_0n_s^+n_s^-$, and conjugating by $n_s^-$ we obtain that $\dot{s}h_0n_s$ is conjugate to $n_s^-\dot{s}h_0n_s^+$.
Since the decomposition $s=s^1s^2$ is reduced 
\begin{equation}\label{inters}
\dot{s}^{-1}(N_s\cap N_-)\dot{s}=\dot{s}^{-1}(N\cap N_-)\dot{s}\subset M_+.
\end{equation} 
Taking into account that $H_0$ normalizes $M_+$ we have $n_s^-\dot{s}h_0n_s^+=\dot{s}h_0h_0^{-1}\dot{s}^{-1}n_s^-\dot{s}h_0n_s^+=\dot{s}h_0m_s$, $m_s=h_0^{-1}\dot{s}^{-1}n_s^-\dot{s}h_0n_s^+\in M_+$. We deduce that all conjugacy classes in the stratum $G_\mathcal{C}$ intersect the variety $\dot{s}H_0M_+$.
 
Recall that by Proposition 6.2 in \cite{S11} we have $u=z_+n'\dot{s}{m}$ for some $z_+\in Z\cap N_+, n'\in N, {m}\in M_+$, where
$u\in G$ is an element of the form
\begin{equation}\label{defu}
u=\prod_{i=1}^{l'}exp[t_{i} X_{-\gamma_i}],
\end{equation}
$t_i\in \mathbb{C}$ are non--zero constants depending on the choice of the representative $\dot{s}$ and the product over roots is taken in the order opposed to the normal order associated to $s^{-1}$.
Factorizing $n'=n_+n_-$, $n_\pm\in N\cap N_\pm$ and recalling (\ref{inters}) we obtain
$u=z_+n'\dot{s}{m}=z_+n_+\dot{s}\dot{s}^{-1}n_-\dot{s}{m}=z_+n_+\dot{s}m',m'=\dot{s}^{-1}n_-\dot{s}{m}\in M_+$, and hence 
$\dot{s}=n_+^{-1}z_+^{-1}u{m'}^{-1}$. 

Now let $\dot{s}h_0m_s,h_0\in H_0,m_s\in M_+$ be an element of $\dot{s}H_0M_+$. Using the previous expression for $\dot{s}$ we can write $\dot{s}h_0m_s=n_+^{-1}z_+^{-1}u{m'}^{-1}h_0m_s$. Conjugating this element by $z_+n_+$ and recalling that $H_0$ normalizes $M_+$ we infer that $\dot{s}h_0m_s$ is conjugate to 
$$
u{m'}^{-1}h_0m_sn_+^{-1}z_+^{-1}=uh_0n=\lambda_0(h_0^{\frac12}n,h_0^{-\frac12}u^{-1}),
$$
where $n=h_0^{-1}{m'}^{-1}h_0m_sn_+^{-1}z_+^{-1}\in N_+$, $h_0^{\frac12}\in H_0$ is any element such that $h_0^{\frac12}h_0^{\frac12}=h_0$, $\lambda_0$ is defined immediately after Proposition \ref{LO}. We deduce that all conjugacy classes in the stratum $G_\mathcal{C}$ intersect the variety $uH_0N_+$.

By part (iv) of Proposition \ref{qcoadj} we conclude that if $\eta \in {\rm Spec}(Z_0)$ satisfies $\pi\eta\in G_\mathcal{C}$ then there is a quantum coadjoint transformation $\widetilde{g}$ such that $\widetilde{\pi}(\widetilde{g}\eta)=(h_0^{\frac12}n,h_0^{-\frac12}u^{-1})$ for some $n\in N_+$, $h_0^{\frac12}\in H_0$.

Denote $\xi=\widetilde{g}\eta$. From the definition of the map $\widetilde{\pi}$ and of the element $u$ it follows that 
$$
\exp(\xi(x_{\beta_D}^-)X_{-\beta_D})\exp(\xi(x_{\beta_{D-1}}^-)X_{-\beta_{D-1}})\ldots \exp(\xi(x_{\beta_1}^-)X_{-\beta_1})=u
$$
which implies $\xi((X_{\gamma_i}^-)^m)=\frac{t_i}{(\varepsilon_{\gamma_i}-\varepsilon_{\gamma_i}^{-1})^m}\neq 0$ for $i=1,\ldots ,l'$ and that $\xi((X_\beta^-)^m)=0$ for $\beta\in \Delta_{\m_+}$, $\beta\not\in \{\gamma_1, \ldots ,\gamma_{l'}\}$. 

By the definition of the elements $f_\beta$ with $\beta=\sum_{i=1}^lm_i\alpha_i$ we have $f_\beta=\prod_{i,j=1}^lL_j^{m_in_{ij}}X_\beta^-$. Therefore the commutation relations between elements $L_j$ and $X_\beta^-$ imply that $f_\beta^m=c_\beta\prod_{i,j=1}^lL_j^{mm_in_{ij}}(X_\beta^-)^m$, where $c_\beta$ are non--zero constants, and hence $\xi(f_\beta^m)=c_\beta\prod_{i,j=1}^l\xi(L_j^m)^{m_in_{ij}}\xi((X_\beta^-)^m)$. Since $\xi(L_j)\neq 0$ for $j=1,\ldots ,l$, $\xi((X_{\gamma_i}^-)^m)=\frac{t_i}{(\varepsilon_{\gamma_i}-\varepsilon_{\gamma_i}^{-1})^m}\neq 0$ for $i=1,\ldots ,l'$ and $\xi((X_\beta^-)^m)=0$ for $\beta\in \Delta_{\m_+}$, $\beta\not\in \{\gamma_1, \ldots ,\gamma_{l'}\}$ we deduce 
$\xi(f_{\gamma_i}^m)=a_i\neq 0$ for $i=1,\ldots ,l'$ and $\xi(f_\beta^m)=0$  for $\beta\in \Delta_{\m_+}$, $\beta\not\in \{\gamma_1, \ldots ,\gamma_{l'}\}$. Thus $\xi$ satisfies the condition of Propositions \ref{irrepod} and \ref{Whitt}. Let $U_{\widetilde{g}\eta}({\frak m}_-)=U_{\xi}({\frak m}_-)$ be the corresponding subalgebra in $U_{\xi}(\g)$.

Note that by Theorem 5.2 in \cite{S10} for any $g\in G_\mathcal{C}$ we have
$$
{\rm dim}~Z_G(g)={\rm dim}~\Sigma_s,
$$
where $Z_G(g)$ is the centralizer of $g$ in $G$. 

By the definition of $\Sigma_s$ we also have ${\rm dim}~\Sigma_s=l(s)+2D_0+{\rm dim}~{\h'}^\perp$. 
Observe also that ${\rm dim}~G=2D+{\rm dim}~\h$ and ${\rm dim}~\h-{\rm dim}~{\h'}^\perp={\rm dim}~{\h'}=l'$,
and hence from (\ref{dimm}) we deduce that ${\rm dim}~\m_-=D-D_0-\frac12(l(s)-l')=\frac{1}{2}({\rm dim}~G-{\rm dim}~\Sigma_s)=\frac{1}{2}{\rm dim}~\mathcal{O}_g$ and ${\rm dim}~U_{\eta}^{\widetilde{g}}({\frak m}_-)={\rm dim}~U_{\widetilde{g}\eta}({\frak m}_-)=m^{{\rm dim}~\m_-}=m^{\frac{1}{2}{\rm dim}~\mathcal{O}_g}$, where $\mathcal{O}_g$ is the conjugacy class of any $g \in G_\mathcal{C}$.

In particular, ${\rm dim}~U_{\eta}^{\widetilde{g}}({\frak m}_-)=m^{\frac{1}{2}{\rm dim}~\mathcal{O}_{\pi\eta}}$, where $\mathcal{O}_{\pi\eta}$ is the conjugacy class of $\pi\eta \in G_\mathcal{C}$.

The remaining statements of this theorem are consequences of Proposition \ref{Whitt}.

\end{proof}

For given $\eta\in {\rm Spec}(Z_0)$ and $\widetilde{g}\in \mathcal{G}$ as in Theorem \ref{DKPconj} we denote $\mathbb{C}_{\chi^{\widetilde{g}}}$ the corresponding one--dimensional representation of $U_{\eta}^{\widetilde{g}}({\frak m}_-)$. Let $Q_{\chi^{\widetilde{g}}}$ be the induced left $U_{\eta}(\g)$--module, $Q_{\chi^{\widetilde{g}}}=U_{\eta}(\g)\otimes_{U_{\eta}^{\widetilde{g}}({\frak m}_-)}\mathbb{C}_{\chi^{\widetilde{g}}}$. Let $W^s_{\varepsilon,\eta}(G)={\rm End}_{U_{\eta}(\g)}(Q_{\chi^{\widetilde{g}}})^{opp}$ be the algebra of $U_{\eta}(\g)$--endomorphisms of $Q_{\chi^{\widetilde{g}}}$ with the opposite multiplication. The algebra $W^s_{\varepsilon,\eta}(G)$ is called a q-W algebra associated to $s\in W$. Denote by $U_{\eta}(\g)-{\rm mod}$ the category of finite--dimensional left $U_{\eta}(\g)$--modules and by $W^s_{\varepsilon,\eta}(G)-{\rm mod}$ the category of finite--dimensional left $W^s_{\varepsilon,\eta}(G)$--modules. Observe that if $V\in U_{\eta}(\g)-{\rm mod}$ then the algebra $W^s_{\varepsilon,\eta}(G)$ naturally acts on the finite--dimensional space $V_{\chi^{\widetilde{g}}}={\rm Hom}_{U_{\eta}^{\widetilde{g}}({\frak m}_-)}(\mathbb{C}_{\chi^{\widetilde{g}}},V)={\rm Hom}_{U_{\eta}(\g)}(Q_{\chi^{\widetilde{g}}},V)$ by compositions of homomorphisms.

\begin{proposition}
Let $\Phi:E\mapsto Q_{\chi^{\widetilde{g}}}\otimes_{W^s_{\varepsilon,\eta}(G)}E$ be the functor from the category of finite--dimensional left $W^s_{\varepsilon,\eta}(G)$--modules to the category $U_{\eta}(\g)-{\rm mod}$. Its right adjoint  functor is given by $\Psi:V\mapsto V_{\chi^{\widetilde{g}}}$ and satisfies $\Psi \circ \Phi={\rm Id}$.
\end{proposition}

\begin{proof}
The first statement follows from the definitions.

For the second one, let $E$ be a finite--dimensional $W^s_{\varepsilon,\eta}(G)$--module. First we observe that by the definition of the algebra $W^s_{\varepsilon,\eta}(G)$ we have $W^s_{\varepsilon,\eta}(G)={\rm End}_{U_{\eta}(\g)}(Q_{\chi^{\widetilde{g}}})^{opp}={\rm Hom}_{U_{\eta}^{\widetilde{g}}({\frak m}_-)}(\mathbb{C}_{\chi^{\widetilde{g}}},Q_{\chi^{\widetilde{g}}})=(Q_{\chi^{\widetilde{g}}})_{\chi^{\widetilde{g}}}$ as a linear space, and hence
$(Q_{\chi^{\widetilde{g}}}\otimes_{W^s_{\varepsilon,\eta}(G)}E)_{\chi^{\widetilde{g}}}=E$. This proves the second statement.
\end{proof}

\end{document}